\theoremstyle{plain}
        \newtheorem{theorem}{Theorem}[section]
        \newtheorem{proposition}[theorem]{Proposition}
        \newtheorem{lemma}[theorem]{Lemma}
\theoremstyle{definition}
        \newtheorem{definition}[theorem]{Definition}
\theoremstyle{plain}
\numberwithin{equation}{section}
\newcommand \be           {\begin{equation}}
\newcommand \ee            {\end{equation}}
\newcommand \Dcal           {\mathcal{D}}
\newcommand \RR           {\mathbb{R}}
\newcommand \ZZ           {\mathbb{Z}}
\newcommand \CC           {\mathbb{C}}
\newcommand \Pbold           {\mathbf{P}} 
\newcommand \PP \Pbold
\newcommand \del           \partial
\newcommand \eps            \epsilon
\newcommand \OO     {\mathcal{O}}
\newcommand \loc        {{\mathrm{loc}}}
\newcommand{\unj}{u^n_j}
\newcommand{\unnj}{u^{n+1}_j}
\newcommand{\unnjj}{u^{n+1}_{j+1}}
\newcommand{\unnjjj}{u^{n+1}_{j+2}}
\newcommand{\unnjm}{u^{n+1}_{j-1}}
\newcommand{\unnjmm}{u^{n+1}_{j-2}}
\newcommand{\uh}{u^h}
\newcommand{\uj}{u_j}
\newcommand{\ujm}{u_{j-1}}
\newcommand{\ujj}{u_{j+1}}
\def\XXint#1#2#3{{\setbox0=\hbox{$#1{#2#3}{\int}$}
\vcenter{\hbox{$#2#3$}}\kern-.5\wd0}}
\let\oldmarginpar\marginpar
\renewcommand\marginpar[1]{\-\oldmarginpar[\raggedleft\footnotesize #1]%
{\raggedright\footnotesize #1}}
\def\build#1_#2^#3{\mathrel{
\mathop{\kern 0pt#1}\limits_{#2}^{#3}}}
\begin{document}

\title{Convergence of a finite difference method for the KdV and modified KdV equations with
$L^2$ data}
%

\author{
{ Paulo Amorim\thanks{Corresponding author. Email: pamorim@ptmat.fc.ul.pt}\,\, and M\'ario Figueira\thanks{Email: figueira@ptmat.fc.ul.pt} }\\[5pt]
Centro de Matem\'atica e Aplica\c c\~oes
Fundamentais,\\
Departamento de Matem\'atica,\\
Universidade de Lisboa, Av. Prof. Gama Pinto 2,\\
1649-003 Lisboa, Portugal. 
}

\maketitle

\begin{abstract} 
{We prove strong convergence of a semi-discrete finite difference method for the KdV and modified KdV 
equations. We extend existing results to non-smooth data (namely, in $L^2$), without size restrictions.
Our approach uses a fourth order (in space) stabilization term and a special conservative 
discretization of the nonlinear term. Convergence follows from a smoothing effect and energy estimates.
We illustrate our results with numerical experiments, including a numerical
investigation of an open problem related to uniqueness posed by Y.~Tsutsumi.
}\\[5pt]
\emph{Keywords:}
{Korteweg-de Vries equation, KdV equation, finite difference scheme.}
\end{abstract}
 

%
%
\noindent
\textit{\ AMS Subject Classification.} {Primary: 65L20. Secondary:  35Q53.}
\newline

\section{Introduction}
This paper is concerned with the study of a numerical approximation of the equation
\be
\label{KdV0}
\del_t u + \del_x^3 u + \beta  \del_x u^{k+1} = 0, \qquad \beta \neq 0, \quad k =1,2,
\ee
with $u = u(x,t),$ $x \in \RR$, $t\ge 0$. When $k=1$, the equation \eqref{KdV0} is referred to as 
the Korteweg--de Vries (KdV) equation, and when $k=2$ as the modified KdV  (mKdV) equation.

As is well-known, the KdV equation describes the unidirectional propagation of small-but-finite
amplitude waves in a nonlinear dispersive medium. It appears in several physical contexts,
such as shallow water waves and ion-acoustic waves in a cold plasma. Also, the modified
KdV equation has been used to describe acoustic waves and Alf\'en waves in plasmas without collisions.
For a more complete description of the physical contexts concerning the Korteweg--de Vries equation
and its generalizations, see \cite{Scott} and the references therein.

A large amount of work on the KdV equation was initially directed toward the study of solitary waves,
i.e., solutions of the form $u(x,t) = U(x-ct)$, especially the so-called soliton solutions, a class of 
solitary waves which preserve the form through nonlinear interaction (see \cite{Scott, Miura} for 
surveys on solitons). 
One of the most relevant results in soliton theory was the development of the 
inverse scattering method,
initially applied to the KdV equation by Gardner et.~al.~\cite{MiuraEtc} and, in a general form,
by Lax \cite{Lax}. This technique was also used to obtain solutions of the KdV equation
with low regularity \cite{Cohen1,Cohen2,Cohen3}.

Here, we concentrate on the numerical approximation of the solution of the Cauchy problem

\begin{subequations}
\label{KdV}
\begin{align}
\label{KdV1}
\del_t u + \del_x^3 u + \beta  \del_x u^{k+1} &= 0, \qquad \beta \neq 0, \quad k =1,2,
\\
u(x,0) &= \varphi(x), \qquad \varphi \in L^2.
\label{KdV2}
\end{align}
\end{subequations}
The mathematical problem of well-posedness for \eqref{KdV1},\eqref{KdV2} has been extensively studied.
We refer to the pioneering results in \cite{BonaScott,BonaSmith,SautTemam} and the improvements in
\cite{Kato1,Kato2}. In these works,
local well-posedness is proved in the Sobolev spaces $H^s$, $s>3/2,$ for generalized KdV (gKdV), 
in which the term $\del_x u^{k+1}$ is
replaced by $\del_x V(u)$.

Existence and uniqueness was also obtained in \cite{GT,GTV} with initial data in weighed $L^2$ and
$H^1$ spaces. In our numerical approach, we follow the energy method used in those papers.

More recently, following the introduction by Bourgain \cite{Bourgain} of certain Fourier spaces, the 
well-posedness result is strongly improved for data in negative Sobolev spaces (see the
monograph \cite{LinaresPonce} and the references therein), and uniqueness of solution in $L^2$
is proved in \cite{Zhou}.

Regarding the numerical solution of the KdV equation, convergence results have been proven for
a linearized equation \cite{Goda} and for smooth solutions \cite{YuSanzSerna}. However (to our 
knowledge), the problem of proving rigorous convergence of numerical
schemes without smoothness assumptions has only attracted attention in more recent years. 
Nixon \cite{Nixon} proves the convergence of 
approximate solutions for a discretized version of 
gKdV, but for small $L^2$ initial data, only. That work is the numerical counterpart of \cite{KPV}.
Finally we refer to the recent work by Pazoto et.~al.~\cite{Pazoto}, dealing with the 
numerical treatment
of the mKdV equation with critical exponent and a damping term, which shares some techniques with
the present work.

Thus, to the best of our knowledge, the problem of rigorous convergence of numerical
schemes for the KdV and mKdV equations with general data in $L^2$ has remained unsolved. 
The purpose of this paper
is to fill that gap.

Although the techniques we use to prove our convergence result are based on the ones
in \cite{GTV} (namely, the use of a fourth order stabilization term), their application to the numerical case
is not trivial. Indeed, it is essential to use a special non-conservative discretization of the nonlinear term 
in \eqref{KdV0}. This idea dates back at least to \cite{Goda}, and is also used in \cite{Pazoto}. Moreover, to obtain the necessary estimates for the
numerical approximation, additional technical difficulties related to interpolators are encountered,
with which we deal below.

An outline of the paper follows. After some notations and definitions in Section~\ref{Sec01}, we
prove our main convergence result in Section~\ref{Conv}.
In Section~\ref{Num} we present some numerical experiments to illustrate our convergence
results and test the accuracy of our scheme. 

Finally, in Section~\ref{Miura}, we investigate
numerically an open question posed by Y.~Tsutsumi \cite{Tsutsumi} relating to the uniqueness
of solution to the Cauchy problem for the KdV equation with measure initial data. 
This is done by means of the Miura transformation (see \cite{Tsutsumi}), 
which relates solutions of the KdV equation
with measure initial data to solutions of the mKdV equation with $L^2$ initial data. As explained 
in more detail in Section~\ref{Miura}, the
numerical evidence we provide suggests that the Cauchy problem for the KdV equation
with measure initial data is ill-posed. Note that, importantly, these numerical simulations 
involve discontinuous initial data in $L^2$ only, and, as such, are not covered by previous 
convergence results.

\section{Notations and definitions}
\label{Sec01}
Let $h$ denote our discretization parameter. We denote by $\uh_j = \uh_j (t)$ the (semi-discrete) 
difference approximation of $u(x_j, t)$, $x_j = jh$, $j\in\ZZ$. For $h>0,$ we define the Banach spaces
\[
\aligned
\ell^p_h (\ZZ) = \big\{ (z_j): z_j \in \CC, \| z\|_{p,h}^p \equiv \sum_{j\in\ZZ}h |z_j|^p < \infty \big\}.
\endaligned
\]
For $p=2$, we denote the usual scalar product by
\[
\aligned
(z,w)_h = \sum_{j\in\ZZ}h \,z_j \bar w_j,
\endaligned
\]
$z = (z_j),$  $w = (w_j)$. Let us also introduce the following standard notations for finite difference
operators. For $u = (u_j),$
\[ D_+ \uj = \frac1h(\ujj - \uj), \qquad D_- \uj = \frac1h( \uj - \ujm), \]
\[ D_0 \uj  = \frac1{2h} ( \ujj - \ujm) =  \frac12(D_+ + D_-) \uj,\]
\[ \Delta_h \uj = D_+ D_- \uj = D_- D_+ \uj = \frac1{h^2} (\ujj -2 \uj + \ujm),  \]
Also, denote the translation operators by 
\[ (u_+)_j = \ujj, \qquad (u_-)_j = \ujm. \]
We obtain the following formulas for the discrete differentiation of a product,
\begin{subequations}\label{DD}
\be
\label{2-10}
 D_+( vu) = v D_+u + u_+ D_+v
\ee
\be
\label{2-20}
 D_-( vu) = v D_-u + u_- D_-v 
\ee
\be
\label{2-30}
 D_0( vu) = v D_0u + \frac12 \big( u_+ D_+v  + u_- D_-v \big)
\ee
\be
\label{2-40}
\Delta_h(vu) = v \Delta_h u + u \Delta_h v + D_+v D_+u + D_- v D_-u.
\ee
\end{subequations}
Also, the difference operators verify in $\ell^2_h$
\[ (D_+ u, v)_h = - (u , D_- v)_h, \qquad (D_0 u, v)_h = - (u , D_0 v)_h,\]
and so
\[ (\Delta_h u, v)_h = (u , \Delta_h v)_h. \] 
We will also need to denote for a sequence $(u_j)$ and for a function $w$
\[
\| u\|_{p,R,h}^p = \sum_{|jh| \le R} h |u_j|^p, \qquad R>0,
\]
\[
\| w\|_{p,R} = \| w \|_{L^p(-R,R)}, \qquad 0< R \le +\infty.
\]
Finally, we introduce the continuous piecewise linear interpolator
\be
\label{defP1}
\aligned
P_1^h u(x) = \uj + (x-x_j) \frac{\ujj - \uj}{x_{j+1} - x_j}, \qquad x\in (x_j, x_{j+1}),
\endaligned
\ee
and the piecewise constant interpolator $ (P_0^h u) (x) =\uj,$ $x\in (x_j, x_{j+1}).$

\section{Convergence results }
\label{Conv}
In this section, we prove our main result, Theorem \ref{Thm-01}, which establishes the convergence of a
numerical approximation of problem \eqref{KdV}.

Let us consider the semi-discrete finite difference scheme
\begin{subequations}
\label{FDA}
\begin{align}
\label{FDA1}
&\frac{d}{dt} \uh +  D^3 \uh  + \beta\frac{k+1}{k+2} [ (\uh)^k D_0 \uh 
+ D_0 (\uh)^{k+1} ] + h \Delta_h^2 \uh = 0,
\\
\label{FDA2}
&\uh(0) = \varphi^h,
\end{align} 
\end{subequations}
where $ D^3 \uj = D_+ D_0 D_- \uj$ and $\Delta^2_h = D_+D_-D_+D_-$ denotes the difference
bi-laplacian, and $\uh$ denotes the unknown grid function $(\uh_j)_{j \in \ZZ}$, $\uh_j(t) $
being the approximation of the solution of \eqref{KdV} at the point $(x_j, t)$. 

The term 
$h \Delta_h^2 \uh$ is introduced in our scheme in order to obtain the uniform (in $h$) stability 
estimates necessary for the convergence proof. This term corresponds to the parabolic regularization
used in \cite{GT} for the continuous problem. Also, the formally consistent discretization 
\be
\label{disc}
\beta  \del_x u^{k+1} \sim \beta\frac{k+1}{k+2} [ (\uh)^k D_0 \uh 
+ D_0 (\uh)^{k+1} ] 
\ee
is based on a corresponding one in \cite{Goda} and is also essential in our proof. See also 
\cite{Pazoto} for an application of the same idea in a different setting.

The following first result holds.

\begin{proposition}
\label{Prop-01}
Let $h>0$. Then, for each initial data $\varphi^h \in \ell^2_h(\ZZ)$, there exists a unique global solution
$\uh(t)\in C(\RR; \ell^2_h(\ZZ))$ of \eqref{FDA}.
\end{proposition}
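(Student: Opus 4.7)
The plan is to view \eqref{FDA} as an ODE in the Banach space $\ell^2_h(\ZZ)$, establish local existence by a Picard--Lindelöf argument, and then promote it to a global solution using a conserved-energy-type a priori estimate derived from the structure of the scheme.

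First I would check that, for fixed $h>0$, all the linear difference operators appearing in \eqref{FDA1} (namely $D_0$, $D_\pm$, $D^3 = D_+D_0D_-$ and $\Delta_h^2$) are bounded on $\ell^2_h(\ZZ)$, with norms that depend on $h$ but that is fine here since $h$ is frozen. For the nonlinear term, I would observe that the inclusion $\ell^2_h(\ZZ) \hookrightarrow \ell^\infty(\ZZ)$ (with constant $h^{-1/2}$) implies that the map $u \mapsto u^k$, understood componentwise, is polynomial and hence locally Lipschitz from $\ell^2_h$ to itself. Combining these facts, the right-hand side of \eqref{FDA1} defines a locally Lipschitz vector field on $\ell^2_h(\ZZ)$, so the Cauchy--Lipschitz theorem in Banach spaces yields a unique maximal solution $\uh \in C([0,T^\ast);\ell^2_h(\ZZ))$ for some $T^\ast \in (0,+\infty]$.

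To extend the solution globally, I would take the $\ell^2_h$ inner product of \eqref{FDA1} with $\uh$ and exploit cancellations coming from the skew-symmetry of $D^3$ and $D_0$ together with the specific form of the discrete nonlinearity \eqref{disc}. Concretely, skew-symmetry of $D^3$ gives $(D^3\uh,\uh)_h=0$, while the self-adjointness identity $(\Delta_h^2 \uh,\uh)_h = \|\Delta_h \uh\|_{2,h}^2 \ge 0$ shows that the stabilization term contributes nonpositively. For the nonlinear contribution,
\[
\bigl((\uh)^k D_0 \uh,\uh\bigr)_h = \sum_j h\,(\uh_j)^{k+1} D_0 \uh_j,
\]
and integration by parts for $D_0$ yields
\[
\bigl(D_0(\uh)^{k+1},\uh\bigr)_h = -\bigl((\uh)^{k+1},D_0 \uh\bigr)_h = -\sum_j h\,(\uh_j)^{k+1} D_0 \uh_j,
\]
so the two pieces cancel exactly. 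One therefore obtains
\[
\tfrac{d}{dt}\|\uh(t)\|_{2,h}^2 = -2h\|\Delta_h \uh(t)\|_{2,h}^2 \le 0,
\]
which gives the uniform bound $\|\uh(t)\|_{2,h} \le \|\varphi^h\|_{2,h}$ on the maximal interval of existence. A standard blow-up alternative then forces $T^\ast = +\infty$; symmetry in $t$ handles negative times, yielding $\uh \in C(\RR;\ell^2_h(\ZZ))$.

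The only real subtlety, and where I would spend the most care, is verifying the cancellation of the nonlinear term for both $k=1$ and $k=2$ and making sure the manipulations are legitimate on $\ell^2_h$ (i.e.\ that summation by parts has no boundary contribution, which follows from the $\ell^2$ decay of the relevant products via the $\ell^2_h \hookrightarrow \ell^\infty$ embedding together with the fact that $u^{k+1}\cdot D_0u \in \ell^1_h$). Everything else is a routine instance of Picard theory combined with an a priori $\ell^2_h$ bound.
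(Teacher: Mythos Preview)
Your proposal is correct and follows essentially the same approach as the paper: local existence via the Banach fixed-point (Picard--Lindel\"of) theorem for a locally Lipschitz vector field on $\ell^2_h(\ZZ)$, and global existence from the a priori $\ell^2_h$ bound obtained by taking the inner product with $\uh$ and using the cancellation built into the discretization \eqref{disc}. The paper's proof is simply the two-line summary of this argument, deferring the energy identity to Lemma~\ref{lem-L2}; your write-up just makes the details explicit.
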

\begin{proof}
Existence of a unique local solution in $\ell^2_h(\ZZ)$ follows from the Banach fixed-point theorem. The 
global existence is an immediate consequence of the uniform bounds on the $\ell^2_h$ norm,
established below in Lemma \ref{lem-L2}.
\end{proof}

Let $P_1^h$ denote the continuous piecewise linear interpolator and let $\varphi \in L^2(\RR)$ be the
initial data for the problem \eqref{KdV}. Also, we denote by $C_w(I; X)$ the space of weakly continuous
functions from the interval $I$ to the Banach space $X$. 

It is now convenient to introduce the notion of weak solution to the Cauchy problem
\eqref{KdV1},\eqref{KdV2}.

\begin{definition}
\label{def-sol}
Let $\varphi \in L^2(\RR)$. A function $u(x,t)$, $x\in\RR$, $t\ge0$, is a solution to the Cauchy problem \eqref{KdV1},\eqref{KdV2}
if 
\enumerate
\item $u \in L^\infty_\loc((0,\infty); L^2(\RR))$,
\item For each test function $\phi \in \Dcal(\RR\times(0,\infty))$ one has
\be
\label{sol}
\aligned
\int_{\RR^2} u( \del_t \phi + \del^3_x \phi) + \beta u^{k+1} \del_x \phi \,dx \,dt = 0,
\endaligned
\ee
\item $u(t) \to \varphi$ in $L^2(\RR)$ as $t \to 0$ a.e.
\endenumerate
\end{definition}

We now state the main result of this paper.

\begin{theorem}[Convergence of approximate solutions]
\label{Thm-01}
Let $\varphi^h \in \ell^2_h(\ZZ)$ be the initial data for the discretized problem 
\eqref{FDA}, such that $ P_1^h 
\varphi^h \to \varphi $ in $L^2(\RR)$ when $h \to 0$. Then, for each $T>0$, the sequence 
$ P_1^h \uh$ satisfies
\begin{align*}
&{ P}_1^h u^h \rightharpoonup  u\quad \text{in} \quad L^2([0, T] ; H^1_\loc(\RR))
 \quad \text{ weak},
   \\
&{ P}_1^h u^h  \rightarrow u\quad \text{in} \quad L^2([0, T] ; L^2_{\loc}(\RR))    
\end{align*}
with $u$ a solution of \eqref{KdV}. Moreover, $u$ satisfies
\be
\label{thm-10}
 u \in (L^\infty \cap C_w) ( [0,T]; L^2_\loc (\RR)),
\ee
\[ 
\| u(t) \|_2 \le \| \varphi\|_2 \quad \text{ and } \quad \| u(t) - \varphi \|_2 \to 0
\quad \text{ as } h\to 0.
\]
\end{theorem}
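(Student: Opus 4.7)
\smallskip
\noindent\textbf{Proof strategy.} The plan is to follow the now-classical scheme of (i) a priori estimates, (ii) compactness, (iii) passage to the limit, (iv) initial and endpoint properties. The essential work is in (i)--(iii); the rest follows from standard manipulations.

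\smallskip
\noindent\textbf{Step 1: A priori estimates.} I would first test the scheme \eqref{FDA1} against $\uh$ in the $\ell^2_h$ scalar product. The dispersive term $D^3\uh$ is skew-adjoint, hence disappears. The nonlinear term is designed so that $\bigl((\uh)^k D_0 \uh + D_0 (\uh)^{k+1},\uh\bigr)_h = 0$: this is the whole point of the specific discretization \eqref{disc}, which must be verified using the product rules \eqref{2-30}. Finally the stabilization term yields
\be
\frac12 \frac{d}{dt} \| \uh \|_{2,h}^2 + h \| \Delta_h \uh \|_{2,h}^2 = 0,
\ee
which gives $\|\uh(t)\|_{2,h} \le \|\varphi^h\|_{2,h}$ (so Proposition \ref{Prop-01} is completed) and an $L^2_t\ell^2_h$ bound of order $h^{-1/2}$ on $\Delta_h \uh$, i.e.\ a bound of order $h^{1/2}$ on $\Delta_h \uh$ in $L^2_t\ell^2_h$. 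Next, to obtain a local $H^1$ smoothing estimate uniform in $h$, I would test the scheme against $\chi(x_j)\uh_j$ (or $x_j\uh_j$ truncated, in the spirit of Kato and \cite{GT,GTV}) for a smooth non-negative weight $\chi$ with $\chi'>0$ on a compact interval. Discrete integration by parts on the $D^3$ term produces a positive contribution proportional to $\sum_j h \chi'(x_j)|D_+\uh_j|^2$; the lower-order commutator errors are absorbed using the $L^2$ bound and Cauchy--Schwarz. The nonlinear term contributes harmlessly thanks to its conservative structure and the $L^2$ bound (with Gagliardo--Nirenberg used on $P_1^h\uh$ for the mKdV case). The outcome is
\be
\int_0^T \| D_+ \uh \|_{2,R,h}^2 \, dt \le C(R,T,\|\varphi\|_2),
\ee
uniformly in $h$. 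These bounds transfer to $P_1^h\uh$ as bounds in $L^\infty(0,T;L^2(\RR))\cap L^2(0,T;H^1_{\loc}(\RR))$, up to errors that vanish as $h\to 0$ (this is where the interpolator estimates of Section~\ref{Sec01} are used).

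\smallskip
\noindent\textbf{Step 2: Compactness and passage to the limit.} From \eqref{FDA1} I would derive a bound on $\partial_t P_1^h\uh$ in some negative-order space $H^{-s}_{\loc}$ (the cubic or quadratic nonlinearity is harmless after duality with smooth test functions, and $D^3$ costs three derivatives). Combining the uniform $L^2_t H^1_{\loc}$ bound with the time-regularity estimate, Aubin--Lions produces a subsequence of $P_1^h\uh$ converging strongly in $L^2([0,T];L^2_{\loc}(\RR))$ and weakly in $L^2([0,T];H^1_{\loc}(\RR))$ to some $u$. The strong local convergence combined with the $L^\infty_t L^2_x$ bound and Gagliardo--Nirenberg $\|v\|_{L^3_{\loc}} \lesssim \|v\|_{L^2}^{5/6}\|v\|_{H^1_{\loc}}^{1/6}$ gives $P_1^h\uh \to u$ in $L^3$ (resp.\ $L^4$) locally in space-time, enough to pass to the limit in the nonlinear term for both $k=1$ and $k=2$. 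The dispersive and linear terms pass to the limit in the distributional sense without difficulty. The main technical point is matching the discrete operators $D_+$, $D_0$, $D^3$, $\Delta_h^2$ applied to $\uh$ with the derivatives of the test function after summation by parts, with the errors being $O(h)$ in operator norms on smooth functions.

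\smallskip
\noindent\textbf{Step 3: Properties of the limit.} The conservation $\|\uh(t)\|_{2,h}\le\|\varphi^h\|_{2,h}$ together with weak lower semicontinuity yields $\|u(t)\|_2 \le \|\varphi\|_2$. Weak continuity $u\in C_w([0,T];L^2_{\loc})$ follows from $u\in L^\infty(0,T;L^2)$ together with $\partial_t u\in L^1(0,T;H^{-s}_{\loc})$ derived from the equation \eqref{sol}, by a standard density argument on test functions. The statement $u(t)\to\varphi$ in $L^2$ as $t\to 0$ is obtained by combining $u(t)\rightharpoonup\varphi$ in $L^2$ (weak continuity plus the initial condition for the scheme) with $\limsup_{t\to 0}\|u(t)\|_2\le\|\varphi\|_2$, which upgrades weak to strong convergence in the Hilbert space $L^2$.

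\smallskip
\noindent\textbf{Main obstacle.} The delicate step is Step 1, specifically the local $H^1$ smoothing estimate in the discrete setting. The weight $\chi$ must be handled at the level of finite differences, so one has to control the commutators $[\chi,D_\pm]$, $[\chi,D^3]$ and the mismatch between $D_0^2$, $D_+D_-$, and $D^3=D_+D_0D_-$. A careful discrete integration by parts using \eqref{DD} and the identities at the end of Section~\ref{Sec01} is needed to ensure the positive $\chi'|D_+\uh|^2$ term survives while all other contributions are bounded uniformly in $h$. A secondary difficulty is the matching between discrete estimates on $\uh$ and continuous bounds on $P_1^h\uh$: one must show that $\|D_+\uh\|_{2,R,h}$ and $\|\partial_x P_1^h\uh\|_{2,R}$ differ by a quantity that goes to zero (or is absorbed), which uses properties of the piecewise linear interpolator and in the nonlinear passage to the limit also of $P_0^h$.
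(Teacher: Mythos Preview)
Your proposal is correct and follows essentially the same route as the paper: the $\ell^2_h$ conservation (Lemma~\ref{lem-L2}), the weighted local smoothing estimate (Lemma~\ref{lem-p} and Proposition~\ref{prop-smoot}), a negative-order bound on $\partial_t P_1^h\uh$ (Lemma~\ref{lem-05}), Aubin--Lions compactness, and the weak-to-strong upgrade at $t=0$ are exactly the ingredients used. The only minor deviations are that the paper obtains weak continuity via Ascoli--Arzel\`a on the family $t\mapsto P_1^h\uh(t)$ rather than from the limit equation, and handles the nonlinear passage to the limit by applying $P_0^h$ (which commutes with powers) to \eqref{FDA1} instead of your $L^3/L^4$ argument on $P_1^h\uh$; both variants work.
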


The proof will be postponed to the next section.

\subsection{Main estimates}

First, let us record some inequalities which will be of use throughout. Let 
$v = (v_j)\in \ell^q_h(\ZZ),$ $q\in[1,\infty]$. From \eqref{defP1}
we derive
\be
\label{P1P0}
\aligned
&\| P_1^h v - P_0^h v \|_{q,R} \le C h \| D_0 v\|_{q,R,h}, \qquad 0<R\le \infty,
\\
&\| P_1^h v - P_0^h v \|_{q,R} \le C  \|  v\|_{q,R,h}, \qquad 0<R\le \infty,
\quad q \in [1, +\infty],
\endaligned
\ee
for some $C$ independent of $h$. As a consequence, we obtain
\be
\label{P1}
\| P_1^h v\|_{q,R} \le C\| P_0^h v\|_{q,R} = C \| v\|_{q,R,h}, \forall v\in \ell^q_h(\ZZ), \quad0<R\le +\infty.
\ee
We will need the following inequalities,
\begin{lemma}
Let $\phi \in \ell^2_h(\ZZ)$. Then,
\be
\label{Ineq01}
\| \phi \|_\infty \le C \| \phi\|_{2,h}^{1/2}\| D_\pm\phi\|_{2,h}^{1/2}
\ee
\be
\label{Ineq02}
\| \phi \|_\infty \le \frac12 \| D_\pm\phi\|_{1,h}
\ee
\end{lemma}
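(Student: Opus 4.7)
The plan is to prove these inequalities as direct discrete analogues of the classical Sobolev/Gagliardo--Nirenberg inequalities on $\RR$, exploiting the fact that any $\phi \in \ell^2_h(\ZZ)$ decays to zero at infinity.

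For \eqref{Ineq02}, I would use the discrete fundamental theorem of calculus. For any fixed index $j$, the telescoping sum gives
\[
\phi_j = \sum_{k < j} (\phi_{k+1} - \phi_k) = \sum_{k<j} h\, D_+ \phi_k,
\qquad
\phi_j = -\sum_{k \ge j} h\, D_+ \phi_k,
\]
where both sums converge because $\phi_k \to 0$ as $|k|\to\infty$ (a consequence of $\phi \in \ell^2_h$). Adding the two representations and taking absolute values yields
\[
2|\phi_j| \le \sum_{k \in \ZZ} h\,|D_+\phi_k| = \|D_+ \phi\|_{1,h},
\]
which is the claim. The same argument with $D_-$ replacing $D_+$ handles the other case.

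For \eqref{Ineq01}, the idea is to run the previous argument on $\phi^2$ rather than on $\phi$, which is the discrete version of the classical trick $\phi(x)^2 = 2\int_{-\infty}^x \phi\,\phi'$. Applying the discrete Leibniz rule \eqref{2-10}, we have
\[
D_+(\phi^2)_k = \phi_k D_+\phi_k + \phi_{k+1} D_+\phi_k = (\phi_k + \phi_{k+1}) D_+\phi_k.
\]
Since $\phi_k^2 \to 0$ at $\pm\infty$, telescoping from both sides as before gives
\[
2\phi_j^2 \le \sum_{k \in \ZZ} h\, |\phi_k + \phi_{k+1}|\,|D_+\phi_k|.
\]
Applying Cauchy--Schwarz in $\ell^2_h$ and the bound $\|\phi_{\cdot} + \phi_{\cdot+1}\|_{2,h} \le 2\|\phi\|_{2,h}$ (from the triangle inequality and translation invariance of the $\ell^2_h$ norm) yields
\[
\phi_j^2 \le \|\phi\|_{2,h}\, \|D_+\phi\|_{2,h},
\]
and taking the supremum over $j$ gives \eqref{Ineq01} with the $D_+$ version; the $D_-$ case is identical by symmetry.

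I do not expect a serious obstacle here: the main technical point is simply justifying that the telescoping series converge, which follows from $\phi_j \to 0$ as $|j|\to\infty$ whenever $\phi \in \ell^2_h(\ZZ)$. The rest is a direct transcription of the continuous proofs, with the discrete Leibniz formula \eqref{2-10} serving in place of the product rule.
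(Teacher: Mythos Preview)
Your proof is correct, but it takes a different route from the paper's. The paper proves both inequalities by passing through the piecewise linear interpolator $P_1^h$: since $\|\phi\|_\infty = \|P_1^h\phi\|_\infty$ and $\del_x P_1^h\phi = P_0^h D_+\phi$, one simply applies the continuous Gagliardo--Nirenberg inequality $\|f\|_\infty \le C\|f\|_2^{1/2}\|f'\|_2^{1/2}$ and the continuous bound $\|f\|_\infty \le \tfrac12\|f'\|_1$ to $f = P_1^h\phi$, and then reads off the discrete norms via \eqref{P1}. Your argument instead stays entirely on the lattice, using telescoping sums and the discrete Leibniz rule \eqref{2-10}; this is more self-contained (no appeal to interpolators or to the continuous inequalities) and even yields the explicit constant $C=1$ in \eqref{Ineq01}. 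The paper's approach, on the other hand, highlights a reusable principle---transferring continuous estimates to the grid through $P_1^h$---which it exploits elsewhere, so each method has its own merit.
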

\begin{proof}
The inequality \eqref{Ineq01} is a consequence of the Gagliardo-Niremberg inequality and $\del_x P_1^h
= P_0^h D_+$:
\[
\aligned
\| \phi\|_\infty &= \| P_1^h\phi\|_\infty \le C \| P_1^h\phi\|_{2}^{1/2}
\| \nabla P_1^h\phi\|_{2}^{1/2} 
\\
& \le C \| \phi\|_{2,h}^{1/2}
\| D_\pm\phi\|_{2,h}^{1/2} ,
\endaligned
\]
while \eqref{Ineq02} is a consequence of the (continuous) inequality $\| \phi\|_\infty \le \frac12
\| \phi'\|_1$.
\end{proof}

We are now ready to state our first stability estimate.
\begin{lemma}
\label{lem-L2}
Let $\uh(t)$ be a solution of \eqref{FDA}. Then, for all $t>0$, it holds
\be
\label{L2}
\| \uh(t) \|_{2,h} \le \| \varphi^h\|_{2,h}.
\ee
In particular, $\uh$ is a global solution of \eqref{FDA}.
\end{lemma}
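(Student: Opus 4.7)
The plan is a standard $\ell^2$-energy estimate: take the $\ell^2_h$ inner product of the evolution equation \eqref{FDA1} with $u^h$ and show that the dispersive and nonlinear contributions vanish, while the fourth-order term produces a non-positive contribution, so that $\frac{d}{dt}\|u^h\|_{2,h}^2 \le 0$. Global existence then follows from the local existence in Proposition \ref{Prop-01} together with this a priori bound.

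First I would treat the linear dispersive term. Using the adjoint identities $(D_+v,w)_h = -(v,D_-w)_h$ and the skew-adjointness of $D_0$ recalled in Section \ref{Sec01},
\[
(D^3 u^h, u^h)_h = (D_+ D_0 D_- u^h, u^h)_h = -(D_0 D_- u^h, D_- u^h)_h = 0,
\]
the last equality because $(D_0 v, v)_h = 0$ for every $v \in \ell^2_h(\ZZ)$. For the stabilization term, the self-adjointness of $\Delta_h$ gives
\[
(h \Delta_h^2 u^h, u^h)_h = h (\Delta_h u^h, \Delta_h u^h)_h = h \|\Delta_h u^h\|_{2,h}^2 \ge 0,
\]
which is the dissipative term that justifies the inclusion of $h\Delta_h^2 u^h$ in \eqref{FDA1}.

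The only step that uses something non-obvious is the nonlinear term, and this is where the specific discretization \eqref{disc} is exactly tuned to work. Writing the nonlinear contribution and applying skew-adjointness of $D_0$ to the second piece,
\[
((u^h)^k D_0 u^h, u^h)_h + (D_0 (u^h)^{k+1}, u^h)_h
= ((u^h)^{k+1}, D_0 u^h)_h - ((u^h)^{k+1}, D_0 u^h)_h = 0,
\]
since pointwise $(u^h)^k D_0 u^h \cdot u^h = (u^h)^{k+1} D_0 u^h$. This algebraic cancellation is the entire point of splitting the flux into the two pieces with the coefficients $\beta(k+1)/(k+2)$, and is really the only place where any care is needed.

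Combining the three computations,
\[
\tfrac{1}{2}\tfrac{d}{dt}\|u^h(t)\|_{2,h}^2 + h\|\Delta_h u^h(t)\|_{2,h}^2 = 0,
\]
and integrating from $0$ to $t$ yields \eqref{L2}. This a priori bound in turn prevents blow-up of the $\ell^2_h$ norm, so the local solution from Proposition \ref{Prop-01} extends to all of $\RR$, giving the global existence claim. I do not expect a real obstacle here: the only subtlety, the treatment of the nonlinear term, is handled by design of the scheme \eqref{disc}, and all manipulations are finite sums so no regularity issues arise.
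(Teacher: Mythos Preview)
Your proof is correct and follows essentially the same approach as the paper: take the $\ell^2_h$ inner product of \eqref{FDA1} with $u^h$, use skew-adjointness to kill the dispersive and nonlinear terms (the latter via the special discretization \eqref{disc}), and observe that the bi-laplacian contributes the dissipative term $h\|\Delta_h u^h\|_{2,h}^2$. The only difference is cosmetic: you spell out the vanishing of $(D^3 u^h, u^h)_h$ explicitly, whereas the paper leaves this as understood.
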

\begin{proof}
In the next proofs, we will, for simplicity, omit $h$ from the notation.
Take the $\ell^2$-scalar product of the equation \eqref{FDA1} with $u \equiv \uh$ to get
\[
\aligned
( \frac d{dt}u , u ) +& \beta\frac{k+1}{k+2} \big[ (u^k D_0 u , u ) + ( D_0 u^{k+1}, u) \big]
\\
&+ h (\Delta_h^2 u , u ) =0.
\endaligned
\]
Now, 
\[ 
(\Delta_h^2 u , u ) = (\Delta_h u , \Delta_h u ) = \|\Delta_h u\|_2^2,
\]
\[ (u^k D_0 u , u ) =  (u^{k+1}, D_0 u  ) = - ( D_0 u^{k+1}, u), \]
and so 
\be
\label{L21}\frac d{dt} \| u(t) \|_2^2 = - h\|\Delta_h u\|_2^2 \le 0,
\ee
from which the conclusion follows.
Notice how the discretization \eqref{disc} leads to the non-increase of the $\ell^2$ norm.
\end{proof}

The next lemma is a fundamental identity which, as we will see in Proposition~\ref{prop-smoot} below,
implies a smoothing effect inherent to the equation: even though the initial data is only in
$\ell^2_h(\ZZ)$, the solution of \eqref{FDA} is actually in a more regular space (uniformly in $h$),
namely, $H^1_\loc$.

Let $p: \RR \to \RR$ be a bounded, strictly increasing, smooth function, with all its derivatives bounded.
Write $p_j = p(x_j),$ $j\in \ZZ$. For simplicity, we do not distinguish in our notation 
the continuous and the discrete $p$.
\begin{lemma}
\label{lem-p}
Let $\uh = \uh(t)$ be the solution of the discrete problem \eqref{FDA}. Then, $\uh$ satisfies the identity
\be
\label{Id-p}
\aligned
&\frac12 \frac d{dt} \| p^{1/2} \uh \|_2^2+ (D_- \uh, D_+ p D_- \uh) 
+ \frac12 (D_+ \uh, D_- p D_+ \uh) 
\\
& + h (D_+ D_- \uh, p D_+ D_- \uh) =  -\frac h2 (D_+ D_- \uh,  D_+ p D_- \uh) 
\\
& + \frac h2 (D_+\uh D_- p,  D_+ D_- \uh) -   (D_- \uh,  \uh_-D_0 D_- p ) 
\\
& - h (D_+ D_- \uh,  D_- p D_- \uh)  - h (D_+ D_- \uh,  D_+ p D_+ \uh) 
\\
&  - h (D_+ D_- \uh,  \uh D_+ D_- p ) 
\\
& +\frac\beta2\frac{k+1}{k+2} \big( (\uh)^{k+1}, \uh_+ D_+p + \uh_- D_- p \big).
\endaligned
\ee
\end{lemma}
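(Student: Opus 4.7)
The plan is to take the $\ell^2_h$ scalar product of \eqref{FDA1} with $pu^h$, write $u \equiv u^h$ for brevity, and re-express each contribution using the self-adjointness relations $(D_+ f, g)_h = -(f, D_- g)_h$, $(D_0 f, g)_h = -(f, D_0 g)_h$, together with the discrete Leibniz formulas \eqref{DD}. The time derivative gives immediately $\frac12 \frac{d}{dt}\|p^{1/2} u\|_2^2$ since $p$ does not depend on $t$.

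The nonlinear term is handled by exploiting the special structure of the discretization \eqref{disc}: writing $(u^k D_0 u, pu)_h = (u^{k+1}, p D_0 u)_h$, then computing $(D_0 u^{k+1}, pu)_h = -(u^{k+1}, D_0(pu))_h$ and expanding with \eqref{2-30}, the two $(u^{k+1}, p D_0 u)_h$ pieces cancel and one is left with $-\frac12(u^{k+1}, u_+ D_+ p + u_- D_- p)_h$; multiplication by $\beta\frac{k+1}{k+2}$ and transposition to the RHS produces the $\beta$-term of the identity. The regularization term is analogous: self-adjointness of $\Delta_h$ yields $h(\Delta_h^2 u, pu)_h = h(\Delta_h u, \Delta_h(pu))_h$, and \eqref{2-40} decomposes $\Delta_h(pu)$ as $p\Delta_h u + u\Delta_h p + D_+ p\, D_+ u + D_- p\, D_- u$, giving the principal term $h(\Delta_h u, p\Delta_h u)_h$ on the LHS and the last three $h$-dependent RHS error terms of the identity.

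The delicate contribution is $(D^3 u, pu)_h$. Writing $D^3 = D_+ D_0 D_-$, integration by parts of the outer $D_+$ together with \eqref{2-20} applied to $D_-(pu)$ gives the splitting $(D^3 u, pu)_h = A + B$ with $A = -(D_0 D_- u, p D_- u)_h$ and $B = -(D_0 D_- u, u_- D_- p)_h$. For $A$, set $v = D_- u$; the symmetrization $(v, p D_0 v)_h = -\frac14(v v_+, D_+ p)_h - \frac14(v v_-, D_- p)_h$ obtained via $(D_0 f, g)_h = -(f, D_0 g)_h$ and \eqref{2-30}, combined with the identity $v_+ = D_+ u$ and the index-shift equality $(v v_+, D_+ p)_h = (v v_-, D_- p)_h$, yields $A = \frac12 (D_- u \cdot D_+ u, D_+ p)_h$. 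For $B$, integrating $D_0$ by parts and expanding $D_0(u_- D_- p)$ via \eqref{2-30}, using the shift relations $(D_- p)_+ = D_+ p$, $D_+ u_- = D_- u$, and the analogous re-shifting of the residual term, gives $B = (D_- u, u_- D_0 D_- p)_h + \frac12((D_- u)^2, D_+ p)_h + \frac12(D_+ u \cdot D_- u, D_- p)_h$.

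To reach the symmetric form on the LHS of the identity it remains to convert the cross-products into diagonal squares via the basic identity $D_+ u = D_- u + h\Delta_h u$: for example $\frac12(D_- u \cdot D_+ u, D_+ p)_h = \frac12((D_- u)^2, D_+ p)_h + \frac{h}{2}(\Delta_h u \cdot D_- u, D_+ p)_h$, and symmetrically for the $D_- p$ weight. The diagonal parts assemble into $(D_- u, D_+ p\, D_- u)_h + \frac12(D_+ u, D_- p\, D_+ u)_h$, which are moved to the LHS, while the $O(h)$ remainders together with the first-order commutator $(D_- u, u_- D_0 D_- p)_h$ supply exactly the first three $h$-terms and the $-(D_- u, u_- D_0 D_- p)_h$ term on the RHS of the stated identity. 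Collecting all contributions proves the lemma. The main technical obstacle is the careful bookkeeping of shifted factors $u_\pm$, $(D_\pm p)_\mp$, and $O(h)$ commutators produced by the discrete Leibniz rules: they cannot be discarded as negligible, since they will be absorbed by the $h(\Delta_h u, p\Delta_h u)_h$ term from the regularization in the smoothing estimate to follow.
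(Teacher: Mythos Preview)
Your proof is correct and follows essentially the same route as the paper: take the $\ell^2_h$ product of \eqref{FDA1} with $pu$, split $(D^3 u, pu)_h$ into the same two pieces $A=-(D_0 D_-u,\,pD_-u)_h$ and $B=-(D_0 D_-u,\,u_-D_-p)_h$, handle the nonlinear term via the cancellation built into the discretization \eqref{disc}, and expand the bi-laplacian term. The only cosmetic differences are that you first write $A$ and $B$ as cross-products $\tfrac12(D_-u\,D_+u,D_\pm p)_h$ and then apply $D_+u=D_-u+h\Delta_h u$ to diagonalize, whereas the paper diagonalizes one step earlier; and you treat the bi-laplacian via $(\Delta_h^2 u,pu)_h=(\Delta_h u,\Delta_h(pu))_h$ together with \eqref{2-40}, while the paper integrates by parts one $D_-$ at a time---both give exactly the same four terms.
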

\begin{proof}
We take \eqref{FDA1}, multiply by $h p_j \uh_j$, and sum over $j\in\ZZ$ to obtain
\be
\label{lem-p01}
\frac12 \frac d{dt} \| p^{1/2}u\|_2^2 + (D^3 u, pu) + \beta\frac{k+1}{k+2} \big[ (u^k D_0 u , p u ) 
+ ( D_0 u^{k+1}, p u) \big] + h (\Delta_h^2 u , p u ) = 0.
\ee
We find from \eqref{DD} 
\[
\aligned
(D^3 u, pu) &= ( D_+D_0D_- u, pu) = -(D_0D_- u, D_- (pu))
\\
&= - (D_0D_- u, p D_-u) - (D_0D_- u, u_- D_- p) = A + B.
\endaligned
\]
Since
\[
\aligned
A = (D_-u, p D_0D_- u) + \big( D_-u, \frac12 (D_+p (D_-u)_+ + D_-p (D_-u)_-) \big),
\endaligned
\]
we obtain 
\[
2A = (D_- u, \frac12 D_+p D_+u) + (D_-u, \frac12 D_-p (D_-u)_- )
\]
and so 
\[
\aligned
A &= \frac14 \big[ (D_-u, D_+p D_+u ) + ( D_+u, D_+p D_-u) \big]
\\
& = \frac12 (D_+u, D_+p D_-u)
\\
&= \frac12 \big[ (D_+u - D_-u, D_+p D_-u) + (D_-u, D_+p D_-u) \big]
\\
&= \frac{h}2  (D_+ D_-u, D_+p D_-u) + \frac12 (D_-u, D_+p D_-u) .
\endaligned
\]
Similarly,
\[
\aligned
B &= (D_-u, u_- D_0D_-p ) + \big( D_- u, \frac12[ D_+u_- D_+p + D_-u_-(D_-p)_- ] \big)
\\
& = (D_-u,  u_- D_0 D_-p) + \frac12 (D_-u, D_+p D_-u) + \frac12(D_+u, D_-u D_-p).
\endaligned
\]
Since 
\[ (D_+ u, D_-p D_-u) = (D_+u, D_-pD_+u) - h(D_+u, D_-p D_+D_- u), \]
we obtain
\[
\aligned
B & = (D_-u, u_- D_0D_-p ) + \frac12 (D_-u, D_+pD_-u)
\\
&+ \frac12 (D_+u, D_-p D_+u) - \frac{h}2 (D_+u, D_-p D_+D_- u).
\endaligned
\]
For the term in \eqref{lem-p01} corresponding to the discrete bi-laplacian, we derive
\[
\aligned
(\Delta_h^2 u , p u ) &=  - (D_-D_+D_- u, p_- D_-u + uD_-p) 
\\
&= -(D_-D_+D_- u, p_- D_-u) - (D_-D_+D_- u, uD_-p)  
\\
& = (D_+D_- u, p D_+ D_-u) + (D_+D_- u, D_-p D_-u )
\\
&\quad + (D_+D_- u,  D_+p D_+u) + (D_+D_- u, u D_+ D_-p  ).
\endaligned
\]
As to the remaining term in \eqref{lem-p01}, we find
\[
\aligned
(u^kD_0 u, pu) + (D_0 u^{k+1}, pu) &= (D_0 u, pu^{k+1}) - ( u^{k+1}, D_0(pu))
\\
&= - \big( u^{k+1}, \frac12 (u_+ D_+p + u_- D_-p) \big).
\endaligned
\]
All these results together give \eqref{Id-p}. This completes the proof of Lemma \ref{lem-p}.
\end{proof}

As a consequence of the two preceding lemmas, we now prove the following result which states that,
at the discrete level, $\uh$ is in $H^1_\loc$.
\begin{proposition}
\label{prop-smoot} 
Let $\uh$ be solution of the discretized problem \eqref{FDA} with initial data $\varphi = \varphi^h
\in \ell^2_h(\ZZ)$. Then, for each $T>0$ and for each $R>0$, there exists a constant $C
=C(R,T, \| \varphi\|_{2,h})$ such that, for all $h>0$,
\be
\label{Estim}
\int_0^T \sum_{|jh| \le R} h | D_\pm u^h_j|^2 \,dt \le C.
\ee
\end{proposition}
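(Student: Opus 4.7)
The plan is to apply the fundamental identity \eqref{Id-p} of Lemma~\ref{lem-p} with a carefully chosen weight $p$, integrate in time over $[0,T]$, and show that the left hand side controls the target quantity while every right hand side term can be either made small or absorbed. I would take $p$ smooth, bounded, and strictly increasing, with $p'(x)\ge c_R>0$ on $[-R,R]$, all derivatives uniformly bounded, and in addition chosen so that $p''/(p')^{1/2}$ and $((p')^{1/2})'$ are bounded; a convenient choice is a rescaled $p(x)=1+\tanh(x/(R+1))$. After time integration, the left hand side of \eqref{Id-p} becomes
\[
\tfrac{1}{2}\|p^{1/2}\uh(T)\|_{2,h}^2 + \int_0^T\!\Big[(D_-\uh, D_+p\,D_-\uh) + \tfrac12(D_+\uh, D_-p\,D_+\uh) + h\|p^{1/2}\Delta_h\uh\|_{2,h}^2\Big]dt - \tfrac{1}{2}\|p^{1/2}\varphi^h\|_{2,h}^2.
\]
The middle two integrand terms bound from below $c_R\int_0^T\|D_\pm\uh\|_{2,R,h}^2\,dt$, which is exactly the quantity in \eqref{Estim}, while $h\|p^{1/2}\Delta_h\uh\|_{2,h}^2$ is kept on the left to absorb the explicit $h$-terms from the right.

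The seven terms on the right of \eqref{Id-p} fall into three groups. Each of the four terms carrying an explicit factor $h$ is split via Cauchy--Schwarz and Young's inequality into a piece absorbable into $h\int_0^T\|p^{1/2}\Delta_h\uh\|_{2,h}^2\,dt$ on the left (taking the Young parameter smaller than $\inf p$) plus a remainder of order $h\int_0^T\|D_\pm\uh\|_{2,h}^2\,dt$; the latter is $O(h^{1/2})$ uniformly in $h$, obtained by combining the Poincar\'e-type identity $\|D_\pm\uh\|_{2,h}^2 = -(\uh,\Delta_h\uh)_h$ with the dissipation bound $h\int_0^T\|\Delta_h\uh\|_{2,h}^2\,dt\le\|\varphi^h\|_{2,h}^2$ obtained in the proof of Lemma~\ref{lem-L2}. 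For the term $-(D_-\uh,\,\uh_- D_0D_-p)$ I would apply a weighted Cauchy--Schwarz by factoring $(D_+p)^{1/2}$: since $D_0D_-p/(D_+p)^{1/2}$ is uniformly bounded in $h$ (the discrete avatar of $p''/(p')^{1/2}\in L^\infty$), Young's inequality yields $\varepsilon(D_-\uh, D_+p\,D_-\uh)+C_\varepsilon\|\uh\|_{2,h}^2$, the first piece being absorbed into the left for $\varepsilon$ small.

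The main obstacle is the nonlinear term $\tfrac{\beta(k+1)}{2(k+2)}\big((\uh)^{k+1},\,\uh_+ D_+p + \uh_- D_-p\big)$: with only $L^2$ data we have no global control of $\|D_\pm\uh\|_{2,h}$, so it must be estimated using only the weighted seminorm $\|D_\pm\uh\|_{2,D_+p,h}$ produced by the left hand side. The key step is a weighted $L^\infty$ bound: setting $q_j:=(D_+p_j)^{1/2}$ and applying \eqref{Ineq01} to $\uh q$, with $D_\pm(\uh q)$ expanded by the discrete product rule \eqref{2-10} and $\|D_\pm q\|_\infty<\infty$, one obtains
\[
\|\uh q\|_\infty \le C\big(1 + \|D_\pm\uh\|_{2,D_+p,h}^{1/2}\big)
\]
uniformly in $h$. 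Writing $\uh_j^{k+1} D_+p_j = \uh_j^{k-1}(\uh_j q_j)^2$, using $\sum_j h|\uh_j\uh_{j+1}|\le\|\uh\|_{2,h}^2$ and $\|\uh\|_{2,h}\le\|\varphi^h\|_{2,h}$, the nonlinearity is controlled by $C\|\uh q\|_\infty^{k}\|\varphi^h\|_{2,h}^2 \le C\big(1+\|D_\pm\uh\|_{2,D_+p,h}^{k/2}\big)$. Time integration and H\"older/Young then produce $\varepsilon\int_0^T\|D_\pm\uh\|_{2,D_+p,h}^2\,dt + C_\varepsilon(T,\|\varphi^h\|_{2,h})$, absorbable for $\varepsilon$ small. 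Putting all the pieces together yields $c_R\int_0^T\|D_\pm\uh\|_{2,R,h}^2\,dt\le C(R,T,\|\varphi\|_{2,h})$, which is \eqref{Estim}. The technical heart of the proof is this nonlinear closure via the weighted $L^\infty$ bound; every other term reduces to routine Cauchy--Schwarz/Young manipulations combined with the $L^2$ and dissipation bounds from Lemma~\ref{lem-L2}.
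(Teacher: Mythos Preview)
Your proposal is correct and follows essentially the same strategy as the paper's proof: apply the identity \eqref{Id-p} with a suitable weight, keep the positive terms on the left, and estimate each of the seven right-hand side terms so that they can be absorbed or bounded by $C(R,T,\|\varphi\|_{2,h})$. The only slip is that your concrete example $p(x)=1+\tanh(x/(R+1))$ has $\inf p=0$, which conflicts with your own requirement ``Young parameter smaller than $\inf p$''; shifting to, say, $p(x)=2+\tanh(x/(R+1))$ (or any $p\ge1$) fixes this without affecting the rest of the argument.

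There are some minor technical variations worth noting. For the term $(D_-\uh,\uh_- D_0D_-p)$ you use a weighted Cauchy--Schwarz relying on the boundedness of $p''/(p')^{1/2}$, whereas the paper performs a discrete summation by parts to produce $Ch\|p^{1/2}\Delta_h\uh\|_{2,h}\|\uh\|_{2,h}+C\|\uh\|_{2,h}^2$; your route is arguably cleaner but imposes an extra structural condition on $p$. For the explicit $h$-terms, you bound the remainder globally via $h\int_0^T\|D_\pm\uh\|_{2,h}^2\,dt=O(h^{1/2})$ using the dissipation estimate \eqref{L21}, while the paper keeps the weighted remainder $Ch\|(D_+p)^{1/2}D_\pm\uh\|_{2,h}^2$ and absorbs it into the left for $h\le h_0$; your approach avoids the smallness restriction on $h$. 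For the nonlinear term, you apply \eqref{Ineq01} to $\uh q$, whereas the paper applies \eqref{Ineq02} to $q(\uh)^2$; both lead to the same absorbable bound $\varepsilon\|(D_+p)^{1/2}D_-\uh\|_{2,h}^2+C_\varepsilon$. These are cosmetic differences within the same overall scheme.
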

\begin{proof}
We apply Lemma \ref{lem-p} with a bounded, strictly increasing, smooth function $p$, with all its derivatives bounded, and such that, moreover, $p(x)\ge1$ for all $x$, and $p'(x) =1$ for 
$x\in [-R,R]$. Let us rewrite the identity \eqref{Id-p}, with obvious notation,  as
\be
\label{Id-p10}
\aligned
&\frac12 \frac d{dt} \| p^{1/2} \uh \|_2^2+ (D_- \uh, D_+ p D_- \uh) 
+ \frac12 (D_+ \uh, D_- p D_+ \uh) 
\\
& + h (D_+ D_- \uh, p D_+ D_- \uh) = A_1 + \cdots + A_7.
\endaligned
\ee
Now observe that under our assumptions on $p$, the terms on the left-hand side (except the first)
are non-negative,
so we must bound the terms $A_i$.

The terms $A_1$ and $A_2$ are similar and yield
\[
\aligned
\frac h2 |(D_+ D_- \uh,  D_+ p D_- \uh) | &\le \frac h2 \|(D_+ D_- \uh) p^{1/2}\|_2 \,
\| p^{-1/2}D_+p D_- \uh\|_2
\\
& \le \eta \frac h2 \|(D_+ D_- \uh) p^{1/2}\|_2^2 + \frac{Ch}{2\eta}\| (D_+ p)^{1/2} D_- \uh \|_2^2,
\endaligned
\]
for all $\eta>0$, where we have used the properties of $p$.
Also, the terms $A_4$ and $A_5$ are similar and give
\[
\aligned
&h (D_+ D_- \uh,  D_- p D_- \uh) 
\\
&=  h (D_+ D_- \uh,  D_- p D_+ \uh)
-  h^2 (D_+ D_- \uh,  D_- p D_+D_- \uh) = B_1 + B_2.
\endaligned
\]
The term $B_1$ is similar to $A_1$, while
\[
\aligned
|B_2| \le C h^2 ( D_+ D_- \uh, p D_+ D_- \uh).
\endaligned
\]
For the term $A_3$, we remark that
\[
\aligned
&(D_- \uh,  \uh_-D_0 D_- p ) =  (D_- \uh - D_-\uh_-,  \uh_-D_0 D_- p ) 
+  (D_- \uh_-,  \uh_-D_0 D_- p )
\\
& = h(D_-D_- \uh,  \uh_-D_0 D_- p ) - (\uh_-,  D_-\uh D_0 D_- p ) 
- ( \uh_-,  \uh D_+D_0 D_- p )
\endaligned
\]
and so
\[
\aligned
|(D_- \uh,  \uh_-D_0 D_- p )| &\le Ch \| p^{1/2} D_+D_- \uh \|_2 \|\uh\|_2 + C\| \uh\|_2^2
\\
&\le C_1h^2 \| p^{1/2} D_+D_- \uh \|^2_2 + C_2\| \uh\|_2^2.
\endaligned
\]
The term $A_6 =h (D_+ D_- \uh,  \uh D_+ D_- p ) $ is easily estimated using the 
Cauchy-Schwarz inequality. Finally, for the last term 
\[A_7 = \frac\beta2\frac{k+1}{k+2} \big( (\uh)^{k+1}, \uh_+ D_+p + \uh_- D_- p \big),\]
let us consider only $k=2$, since the case $k=1$ is easier. Setting $q = (D_+p)^{1/2}$ and $u=\uh$,
we obtain
\[
| (u^3, u_+ D_+ p) | \le C\| u\|_2^2 \|q u^2\|_\infty
\]
and from \eqref{Ineq02} it follows
\[
\aligned
\|q u^2\|_\infty  &\le \| (D_-q) u^2_- + q (u D_- u +u_- D_-u) \|_1
 \le C\| u\|_2^2 + \| qD_-u\|_2\|u\|_2.
\endaligned
\]
Hence,
\[
\aligned
| (u^3, u_+ D_+p)| &\le C \|u\|_2^4 + C \|u\|_2^3 \|(D_+p)^{1/2} D_-u \|_2
\\
& \le \eps \|(D_+p)^{1/2} D_-u \|_2^2 + \frac C\eps \|u\|_2^6 + C\| u\|_2^4,
\endaligned
\]
for any $\eps>0$.

Choosing $\eps, \eta$ small enough, and for some small enough $h_0$ (which does not depend on
$R,T$ or $\varphi$), we find, after integrating \eqref{Id-p10} on $[0,T]$ and using the previous estimates,
\[
\aligned
\int_0^T \sum_{|jh| \le R} h | D_\pm u^h_j|^2 \,dt \le C(R,T, \|\varphi\|_2).
\endaligned
\]
This completes the proof of Proposition \ref{prop-smoot}.
\end{proof}


\subsection{Proof of Theorem~\ref{Thm-01}}
The proof of Theorem \ref{Thm-01} relies on Aubin's compactness result,
which we state here, in a simplified form, for the reader's convenience.
\begin{lemma}[{\cite[p.~58]{Lions}}]
\label{lem-aubin}
Let $1<p<\infty$, $T>0$, and consider reflexive Banach spaces $B_0 \subset B \subset B_1$
such that $B_0$ is compactly embedded in $B$. Then, the space
\[
\aligned
\Big\{ v : v\in L^2\big( 0,T; B_0\big), \quad \frac{dv}{dt}  \in L^p\big( 0,T; B_1 \big) \Big\}
\endaligned
\]
is compactly embedded in $L^2\big( 0,T; B\big)$.
\end{lemma}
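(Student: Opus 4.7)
The statement is the classical Aubin--Lions compactness lemma, cited from Lions's monograph, so my plan is to reconstruct the standard proof. The argument splits cleanly into two ingredients: (i) a pointwise interpolation inequality of Ehrling type, and (ii) a relative-compactness statement in $L^2(0,T;B_1)$ for sequences with time-derivatives bounded in $L^p(0,T;B_1)$. The fundamental tension driving the whole proof is that the hypothesis $v \in L^2(0,T;B_0)$ gives control in $B_0$ only in the integrated sense, while a direct Arzel\`a--Ascoli argument in $C([0,T];B_1)$ requires \emph{pointwise} control in a space compactly embedded in $B_1$; the main obstacle is to bridge this gap.

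First I would prove the Ehrling inequality: for every $\eta>0$ there exists $C_\eta$ with
\[
\|v\|_B \le \eta \|v\|_{B_0} + C_\eta \|v\|_{B_1} \qquad \text{for all } v\in B_0.
\]
Standard contradiction: if this fails for some $\eta_0>0$, extract $v_n\in B_0$ with $\|v_n\|_B=1$, $\|v_n\|_{B_0}\le 1$, $\|v_n\|_{B_1}\le 1/n$. Compactness of $B_0\hookrightarrow B$ yields a subsequence converging in $B$ to some $v$; continuity $B\hookrightarrow B_1$ and $v_n\to 0$ in $B_1$ force $v=0$, contradicting $\|v\|_B=1$.

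Now given a bounded sequence $\{v_n\}$ in the ambient space $W=\{v\in L^2(0,T;B_0): v'\in L^p(0,T;B_1)\}$, I square Ehrling and integrate in $t$ to obtain
\[
\|v_n-v_m\|_{L^2(0,T;B)}^2 \le 2\eta^2 \|v_n-v_m\|_{L^2(0,T;B_0)}^2 + 2C_\eta^2 \|v_n-v_m\|_{L^2(0,T;B_1)}^2.
\]
The first term is bounded by $C\eta^2$ uniformly in $n,m$ by the $L^2(B_0)$-bound, so after choosing $\eta$ small, it is enough to extract a subsequence that is Cauchy in $L^2(0,T;B_1)$; that reduction is the whole game.

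The heart of the argument, and the step I expect to be most delicate, is the $L^2(B_1)$-compactness. The idea is to regularize in time by a one-sided average $v_n^\varepsilon(t):=\varepsilon^{-1}\int_t^{t+\varepsilon} v_n(s)\,ds$ on $[0,T-\varepsilon]$. By Cauchy--Schwarz $\|v_n^\varepsilon(t)\|_{B_0}\le \varepsilon^{-1/2}\|v_n\|_{L^2(0,T;B_0)}$, so the orbit $\{v_n^\varepsilon(t):n\in\mathbb N\}$ is bounded in $B_0$ \emph{uniformly in} $t$, hence relatively compact in $B_1$ (using $B_0\hookrightarrow\hookrightarrow B\hookrightarrow B_1$). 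Simultaneously, $(v_n^\varepsilon)'(t)=\varepsilon^{-1}(v_n(t+\varepsilon)-v_n(t))$ together with the $L^p(B_1)$-bound on $v_n'$ gives equicontinuity in $B_1$ (note $p>1$ is essential here, via Morrey-type H\"older continuity with exponent $1-1/p$). Arzel\`a--Ascoli then yields a subsequence converging in $C([0,T-\varepsilon];B_1)$. Finally, the estimate $\|v_n^\varepsilon-v_n\|_{L^p(0,T-\varepsilon;B_1)}\le C\varepsilon^{1-1/p}\|v_n'\|_{L^p(0,T;B_1)}$ (from $v_n(t)-v_n^\varepsilon(t)=\varepsilon^{-1}\int_t^{t+\varepsilon}\int_t^s v_n'(r)\,dr\,ds$) shows $v_n^\varepsilon\to v_n$ as $\varepsilon\to 0$ uniformly in $n$. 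A diagonal extraction along $\varepsilon_k\to 0$ therefore delivers a subsequence Cauchy in $L^p(0,T;B_1)$, and combining with the uniform $L^\infty(0,T;B_1)$-bound inherited from the $W$-bound, also Cauchy in $L^2(0,T;B_1)$. Feeding this back into the squared Ehrling estimate and letting $\eta\to 0$ after $n,m\to\infty$ concludes the proof.
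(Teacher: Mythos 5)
The paper does not prove this lemma: it is quoted as a classical result with a citation to \cite[p.~58]{Lions} and used as a black box in the proof of Theorem~\ref{Thm-01}, so there is no in-paper argument to compare yours against; what can be judged is whether your reconstruction is a correct proof of the cited statement, and it is. Your two-step structure --- an Ehrling interpolation inequality reducing compactness in $L^2(0,T;B)$ to compactness in $L^2(0,T;B_1)$, followed by time-mollification plus Arzel\`a--Ascoli to obtain the latter --- is the standard modern argument, closer in spirit to Simon's treatment than to Lions's original proof, which instead extracts a weakly null subsequence in the reflexive space $W$, uses the continuous embedding $W\hookrightarrow C([0,T];B_1)$, and shows strong convergence to zero in $L^2(0,T;B_1)$ directly; one small dividend of your route is that reflexivity of $B_0$ and $B_1$ is never actually used. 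Minor points to tidy, none of which affects correctness: in the Ehrling contradiction the normalization $\|v_n\|_B=1$ yields $\|v_n\|_{B_0}\le 1/\eta_0$ rather than $\le 1$; the identity $(v_n^\varepsilon)'(t)=\varepsilon^{-1}\bigl(v_n(t+\varepsilon)-v_n(t)\bigr)$ presupposes that $v_n$ has first been replaced by its absolutely continuous $B_1$-valued representative (legitimate since $v_n\in L^1(0,T;B_1)$ and $v_n'\in L^p(0,T;B_1)$ with $p>1$); and in the final diagonal step you must also dispose of the boundary strip $[T-\varepsilon,T]$, which the uniform $L^\infty(0,T;B_1)$ bound you invoke does handle (it also converts $L^p$-Cauchyness into $L^2$-Cauchyness when $p<2$, as you note).
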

In order to apply Lemma \ref{lem-aubin}, we will use the following estimates.
\begin{lemma}
\label{lem-05}
Let $\uh$ be given by \eqref{FDA1},\eqref{FDA2} and let $T,R>0$. Then, there exists $p>1$ such that
\be
\label{proof-10}
\aligned
\int_0^T \big\| P_1^h \uh \big\|^2_{H^1(-R,R)} \,dt \le C, 
\endaligned
\ee
\be
\label{proof-20}
\aligned
\int_0^T \big\| \frac{d}{dt} P_1^h \uh \big\|^p_{H^{-5}(-R,R)} \,dt \le C,
\endaligned
\ee
uniformly in $h$, with $C = C(R, T, \| \varphi \|_{2,h})$.
\end{lemma}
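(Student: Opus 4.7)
The plan for (\ref{proof-10}) is an immediate combination of the two preceding estimates. Using $\partial_x P_1^h v = P_0^h D_+ v$ and (\ref{P1}) I obtain, for each $t$, the pointwise bound
\[
\| P_1^h \uh\|_{H^1(-R,R)}^2 \le C\big(\| \uh\|_{2,R+h,h}^2 + \| D_+ \uh\|_{2,R+h,h}^2\big),
\]
and integrating in time the $L^2$ part is controlled uniformly by Lemma \ref{lem-L2} while the derivative part is controlled by Proposition \ref{prop-smoot}.

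For (\ref{proof-20}), since $\frac{d}{dt} P_1^h \uh = P_1^h \frac{d}{dt}\uh$, I would use the scheme (\ref{FDA1}) to split the time derivative into the dispersive term $-D^3 \uh$, the nonlinear term $-\beta\tfrac{k+1}{k+2}\big[(\uh)^k D_0 \uh + D_0 (\uh)^{k+1}\big]$, and the regularization $-h\Delta_h^2 \uh$, and estimate each contribution in $H^{-5}(-R,R)$ by duality. For a test function $\phi \in \Dcal(-R,R)$ with $\|\phi\|_{H^5}\le 1$, the pairing $\int P_1^h w\cdot \phi\,dx$ is first rewritten as the discrete sum $\sum_j h\,w_j\,\tilde\phi_j$ (where $\tilde\phi_j = \phi(x_j)$ or a local average), the error between the continuous and discrete pairings being of order $h\|w\|_{2,h}\|\phi\|_{H^1}$ thanks to (\ref{P1P0}). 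The summation-by-parts rules (\ref{DD}) together with the self-adjointness of $D_0$ and $\Delta_h$ then let me move every finite-difference operator onto $\tilde\phi_j$. At most four discrete derivatives of $\tilde\phi_j$ are required (for $\Delta_h^2$), and each is bounded by $\|\phi\|_{H^5}$. This gives $\| P_1^h D^3 \uh\|_{H^{-5}} \le C\|\uh\|_{2,h}$ and $\| h P_1^h \Delta_h^2 \uh\|_{H^{-5}} \le Ch\|\uh\|_{2,h}$ for the linear pieces, both in $L^\infty_t$ by Lemma \ref{lem-L2}.

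For the nonlinear term, transferring $D_0$ onto the test function reduces the bound to $\|(\uh)^{k+1}\|_{1,R+h,h}\|\phi\|_{H^1}$. When $k=1$ this is $\|\uh\|_{2,h}^2$, uniformly bounded in time. The main obstacle is $k=2$: I must control $\|(\uh)^3\|_{1,R+h,h}\le \|\uh\|_{\infty,R+h}\|\uh\|_{2,h}^2$ in $L^p_t$ for some $p>1$, and the $\ell^\infty$ norm on a bounded interval is not directly given by the previous lemmas. I would deduce it from a localized version of (\ref{Ineq01}): applying Sobolev embedding $H^1(-R-1,R+1)\hookrightarrow L^\infty(-R-1,R+1)$ to $P_1^h\uh$ and using (\ref{P1}) yields
\[
\|\uh\|_{\infty,R+h} \le C\big(\|\uh\|_{2,R+1+h,h}+\|D_+\uh\|_{2,R+1+h,h}\big).
\]
Combining the global uniform-in-$t$ bound of Lemma \ref{lem-L2} with the local $L^2_t$ control on $D_+ \uh$ from Proposition \ref{prop-smoot} then puts $\|(\uh)^3\|_{1,R+h,h}$ in $L^2_t$ locally, so the nonlinear contribution to $\frac{d}{dt}P_1^h\uh$ lies in $L^2((0,T);H^{-5}(-R,R))$, giving $p=2$ in (\ref{proof-20}).

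The two points requiring the most care are the uniform-in-$h$ conversion between the continuous $L^2$ dualities and the discrete $\ell^2_h$ pairings, which must be tracked via (\ref{P1P0}) throughout, and the localized Gagliardo-Nirenberg/Sobolev argument for $k=2$, which is the only place where the smoothing effect of Proposition \ref{prop-smoot} enters Lemma \ref{lem-05} in an essential way.
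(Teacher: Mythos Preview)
Your argument for \eqref{proof-10} is correct and matches the paper's. For the linear terms in \eqref{proof-20} your approach---moving \emph{all} finite differences onto the test function so that only $\|\uh\|_{2,h}$ appears---is actually simpler than the paper's, which stops the summation by parts one step early (leaving a factor $D_- u^h_j$) and then invokes Proposition~\ref{prop-smoot} to control $\int_0^T \|D_- \uh\|_{2,R,h}^2\,dt$. Both routes work; yours gives $L^\infty_t$ bounds on these contributions directly from Lemma~\ref{lem-L2}.

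The gap is in the nonlinear part. You write ``transferring $D_0$ onto the test function reduces the bound to $\|(\uh)^{k+1}\|_{1,R+h,h}$,'' but this handles only the term $D_0(\uh)^{k+1}$. The other piece $(\uh)^k D_0 \uh$ has no outer $D_0$ to transfer: the discrete product rule does not let you rewrite it as $D_0$ of something plus a harmless remainder (the continuous identity $(k+1)u^k u_x=(u^{k+1})_x$ fails at the discrete level). You must therefore bound it directly, say in $L^1(-R,R)$ by $\|\uh\|_{\infty,R}\,\|\uh\|_{2,h}\,\|D_0 \uh\|_{2,R,h}$ when $k=2$. With your non-interpolated Sobolev bound $\|\uh\|_{\infty,R}\le C(1+\|D_+ \uh\|_{2,R+1,h})$ this product lies only in $L^1(0,T)$, since both variable factors are merely in $L^2_t$; so you do not obtain any $p>1$. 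The remedy is to use the genuine interpolated form of \eqref{Ineq01} (applied after multiplying by a cutoff $\theta$), which yields $\|\uh\|_{\infty,R}\le C(1+\|D_+ \uh\|_{2,R+1,h}^{1/2})$; then the product lies in $L^{4/3}_t$. The paper proceeds in the same spirit but estimates $(\uh)^k D_0 \uh$ in $L^{3/2}(-R,R)$ via a cutoff and \eqref{Ineq01}, arriving at $p=12/(3k+5)$; your route, once repaired, would give $p=4/3$ for $k=2$ and $p=2$ for $k=1$.
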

\proof
First of all, note that since $\del_x P_1^h = P_0^h D_+$, it follows from Lemma~\ref{lem-L2} and Proposition~\ref{prop-smoot}
that the estimate \eqref{proof-10} holds for each $R>0$.

Let us now prove the estimate \eqref{proof-20}. Let $\uh$ be given by \eqref{FDA1},\eqref{FDA2}.
We apply the piecewise linear continuous interpolator $ P_1^h$ to 
the equation \eqref{FDA1} to obtain
\be
\begin{aligned}
\label{proof-30}
&\frac{d}{dt} P_1^h\uh +   P_1^h D^3\uh 
\\ 
&\qquad + \beta\frac{k+1}{k+2} \big( P_1^h [(\uh)^k D_0 \uh] 
+ P_1^h D_0 (\uh)^{k+1} \big) + h P_1^h\Delta_h^2 \uh = 0,
\end{aligned}
\ee
with $\uh = \uh(t)$. We begin by estimating the term $P_1^h D^3\uh $, for which it is convenient
to consider the decomposition $ P_1^h = (P_1^h - P_0^h) + P_0^h$ and analyze the two resulting terms.
For each test function $\phi \in \Dcal( -R,R)$ we have
\[
\aligned
\big( P_0^h D^3 \uh, \phi \big) &= \sum_{|jh| \le R}  \int_{x_j}^{x_{j+1}} P_0^h D^3 \uh \phi \,dx
\\
& = \sum_{|jh| \le R} D_+ D_0 D_- \uj \int_{x_j}^{x_{j+1}} \phi(x) \,dx
\\
& = \sum_{|jh| \le R} D_-\uj \int_{x_j}^{x_{j+1}} \frac1{h^2}( \phi(x-2h) - \phi(x-h) - \phi(x) 
+ \phi(x+h)) \,dx
\endaligned
\]
and so (by Taylor expansion of $\phi$)
\[
\aligned
\big|\big( P_0^h D^3 \uh, \phi \big)\big| &\le  C\sum_{|jh| \le R} h |D_-\uj| \| \phi''\|_\infty
\\
& \le C \Big( \sum_{|jh| \le R} h |D_-\uj|^2 \Big)^{1/2} \| \phi\|_{H^{3}(-R,R)}.
\endaligned
\]
Hence, by Proposition \ref{prop-smoot}, we have
\be
\label{proof-40}
\aligned
\int_0^T \big\| P_0^h D^3 \uh\big \|_{H^{-3}(-R,R)} \, dt \le C.
\endaligned
\ee
Next, if $x \in (x_j , x_{j+1})$ and $v_j = D^3 \uj$, we easily find 
\[ ( P_1^h - P_0^h) v^h (x) = (x- x_j)\frac{v_{j+1} - v_j}h, \]
and so, with obvious notation,
\[
\aligned
\big( (P_1^h - P_0^h ) D^3 \uh, \phi \big) &= \sum_{|jh| \le R}D_+ D^3
\uj \int_{x_j}^{x_{j+1}} (x-x_j)  \phi(x) \, dt 
\\
& = \frac1h \big( D_+ D^3 \uj , A_j\big)
\\
& = - \frac1h \big( D_- \uj ,  D_0 D_-D_- A_j\big).
\endaligned
\]
A straightforward computation gives
\[
\aligned
&D_0 D_-D_- A_j 
\\
&\qquad= \frac1{2h^3} \int_{x_j}^{x_{j+1}} \big( \phi(x+h) - 2 \phi(x) + 2 \phi(x-2h) - 
\phi(x-3h) \big) (x - x_j) \,dx
\endaligned
\]
from which we obtain by Taylor expansion of $\phi$ and Proposition~\ref{prop-smoot}
\[
\aligned
\int_0^T \big|\big( (P_1^h - P_0^h ) D^3 \uh, \phi \big)\big| &\le C \int_0^T
\sum_{|jh| \le R}h |D_- \uj| 
\|\phi'''\|_{\infty} \,dt
\\
& \le C \|\phi\|_{H^{4}(-R,R)} \int_0^T \Big(\sum_{|jh| \le R}h |D_- \uj|^2 \Big)^{1/2} \,dt
\\
& \le C \|\phi\|_{H^{4}(-R,R)}.
\endaligned
\]
From this and \eqref{proof-40} we obtain the estimate 
\be
\label{proof-50}
\aligned
\int_0^T \big\| P_1^h D^3 \uh\|^2_{H^{-4}(-R,R)} \, dt \le C.
\endaligned
\ee
In an entirely similar way, we arrive at
\be
\label{proof-60}
\aligned
\int_0^T \big\| P_1^h \Delta^2_h \uh\|^2_{H^{-5}(-R,R)} \, dt \le C.
\endaligned
\ee

It remains to estimate the nonlinear terms in \eqref{proof-30}. Choose a smooth function
$\theta : \RR \to \RR$ such that $\theta(x) = 1$ if $|x| \le R$ and $\theta(x) = 0$
if $|x| \ge R+1$. Using \eqref{P1P0} and \eqref{Ineq01} we derive, for $R>0$, $k = 1,2$,
\[
\aligned
\big\| P_0^h [ (\uh)^k D_0 \uh] \big\|_{3/2,R,h} & \le \| D_0\uh \|_{2,R+1,h} \| \theta\uh\|_{6k,h}^k
\\
&\le C \| D_0\uh \|_{2,R+1,h} \| \theta \uh\|_\infty^{k-1/3}
\\
&\le  \| D_0\uh \|_{2,R+1,h} \big( C + C  \| D_+ \uh\|_{2,K+1,h}^{k/2-1/6} \big)
\\
&\le C + C  \| D_+\uh \|_{2,R+1,h}^{k/2 + 5/6},
\endaligned
\]
with $C = C(\|\varphi\|_{2,h}, R)$. Choosing $p = 12/(3k+5) >1$, we obtain 
from Proposition~\ref{prop-smoot}
\[
\aligned
\int_0^T \big\| P_0^h [(\uh)^k D_0\uh] \big\|_{3/2}^p \,dt 
&\le C + C \int_0^T \big\| D_+ \uh \big\|_{2,R+1,h}^2 
\le C.
\endaligned
\]
Since  $\| (P_1^h - P_0^h)\uh \|_{3/2} \le C \| P_0^h \uh \|_{3/2}$ (cf.~\eqref{P1P0}), 
we conclude that
\be
\label{proof-70}
\aligned
\int_0^T \big\| P_1^h[(\uh)^k D_0 \uh ] \big\|_{3/2}^p \,dt \le C, \qquad k=1,2.
\endaligned
\ee

For the remaining nonlinear term $ P_1^h D_0 (\uh)^{k+1}$, we split it as above into
$(P_1^h - P_0^h) + P_0^h$. First, note that
\be
\label{proof-80}
\aligned
 \big\| P_0^h (\uh)^{k+1} \big\|_{2,R} &\le C \| \theta\uh\|^{k+1}_{2k+2} 
\le  C \| \theta\uh\|^{k}_{\infty} \le C + C\| D_+\uh\|^{k/2}_{2,R+1,h},
\endaligned
\ee
and, by \eqref{P1}, the same estimate is obtained for $\| P_1^h (\uh)^{k+1} \|_{2,R}$. Next,
since $ P_0^hD_0 = \del_x P_1^h$, we obtain for $k=1,2$
\[
\aligned
\int_0^T \big\| P_0^h D_0 (\uh)^{k+1}\big\|^{4/k}_{H^{-1}(-R,R)} \,dt & = 
\int_0^T \big\| \del_x P_1^h  (\uh)^{k+1}\big\|^{4/k}_{H^{-1}(-R,R)} \,dt 
\\
&\le \int_0^T \big\| P_1^h  (\uh)^{k+1}\big\|^{4/k}_{2,R} \,dt 
\\
&\le C + C \int_0^T \| D_+ \uh\|^2_{2,R+1,h} \,dt \le C.
\endaligned
\]
We now need to estimate $( P_1^h - P_0^h) D_0 (\uh)^{k+1}$. With computations similar to
the ones after \eqref{proof-40}, we find
\[
\aligned
\big| \big( ( P_0^h - P_1^h) D_0 (\uh)^{k+1}, \phi \big) \big| &\le
C \sum_{|jh| \le R} h |\uj|^{k+1} \| \phi''\|_\infty 
\\
&\le C \big\| P_0^h (\uh)^{k+1} \big\|_{2,R} \| \phi\|_{H^3(-R,R)},
\endaligned
\]
and by \eqref{proof-80},
\[
\aligned
\int_0^T \big\| (P_0^h - P_1^h) D_0 (\uh)^{k+1} \big\|^{4/k}_{H^{-3}(-R,R)} \,dt
\le C + C \int_0^T \| D_+ \uh\|^2_{2,R+1,h} \,dt \le C.
\endaligned
\]
Thus we conclude that
\be
\label{proof-90}
\aligned
\int_0^T \big\| P_1^h D_0 (\uh)^{k+1}\big\|^{4/k}_{H^{-3}(-R,R)} \,dt  \le C, \qquad k = 1,2.
\endaligned
\ee
The desired estimate \eqref{proof-20}, with $p=12/(3k+5) > 1$, now follows from  the estimates
\eqref{proof-30}, \eqref{proof-50}, \eqref{proof-60},
\eqref{proof-70}, and \eqref{proof-90}. This completes the proof of Lemma~\ref{lem-05}.
\endproof

\begin{proof}[Proof of Theorem \ref{Thm-01}]
In view of the estimates in Lemma~\ref{lem-05}, we apply Lem\-ma~\ref{lem-aubin} with
$p = 12/(3k +5)$, $B_0 = H^1(-R,R)$, $B = L^q(-R,R),$ $q\in (1, \infty]$, and $B_1= H^{-5}(-R,R)$ 
(note that $B_0\subset B$ with compact embedding). We conclude that, up to a subsequence,
$ P_1^h \uh$ converges weakly in $L^2 (0,T; H^1(-R,R))$ and strongly in $L^2(0,T; L^q(-R,R))$.
Using a diagonal argument, we obtain for a further subsequence 
\begin{align}
\label{proof-95}
&{ P}_1^h u^h \rightharpoonup  u\quad \text{in} \quad L^2([0, T] ; H^1(-R,R))
\quad \text{ weak},
   \\ \notag
&{ P}_1^h u^h  \rightarrow u\quad \text{in} \quad L^2([0, T] ; L^q(-R,R)), \quad R>0, 
\quad q \in (1,\infty],
\end{align}
for some $u \in L^\infty( 0,T; L^2(\RR)) \cap L^2(0,T; H^1_\loc(\RR))$, as $h\to 0$. 
Also, from \eqref{P1P0}
and Proposition~\ref{prop-smoot} we can conclude that
\be
\label{proof-100}
\aligned
P_0^h \uh \to u \quad \text{in}\quad L^2(0,T; L^q(-R,R)), \quad 1 < q \le 2.
\endaligned
\ee

Now we must prove that $u$ is a weak solution of the problem
\eqref{KdV1},\eqref{KdV2}, in the sense of Definition~\ref{def-sol}.
Let us apply the piecewise constant interpolator $ P_0^h$ to the discrete equation
\eqref{FDA1}:
\be
\label{proof-105}
\begin{aligned}
&\frac{d}{dt} P_0^h\uh +   P_0^h D^3\uh 
\\ 
&\qquad + \beta\frac{k+1}{k+2} \big( P_0^h [(\uh)^k D_0 \uh] 
+ P_0^h D_0 (\uh)^{k+1} \big) + h P_0^h\Delta_h^2 \uh = 0.
\end{aligned}
\ee
First, consider the linear terms. We take a test function $\phi \in \Dcal(\RR\times (0,\infty))$
and compute in the sense of distributions
\[
\aligned
\big( P_0^h D^3 \uh, \phi \big) &= \int_0^\infty \sum_{j\in\ZZ} D^3 \uj \int_{x_j}^{x_{j+1}}
\phi(x,t) \,dx \,dt
\\
& = - \int_0^\infty \sum_{j\in\ZZ} \uj \int_{x_j}^{x_{j+1}} \big( \del^3_x \phi + \OO(h) \big) \,dx \,dt
\\
& = -\big( P_0^h \uh , \del^3_x \phi \big) + \OO(h) 
 \to -\big( u, \del^3_x \phi \big) = \big( \del^3_x u, \phi \big)
\endaligned
\]
as $h\to 0$.
The term $h P_0^h\Delta_h^2 \uh $ is treated similarly and tends to zero as $h\to 0$ in the sense of
distributions.

Now consider the nonlinear terms. Note that $ P_0^hD_0 = \del_x P_1^h$ and write
$ P_0^h = (P_0^h - P_1^h) + P_1^h$.
Using \eqref{P1P0} and \eqref{Ineq01} we find
\[
\aligned
\big\| (P_1^h - P_0^h) (\uh)^{k+1} \big\|_{1,R,h} \le C h \|D_+ (\uh)^{k+1}\|_{1,R,h} 
\le Ch(1+ \|D_+ \uj\|^{3/2}_{2,R+1,h})
\endaligned
\]
and so
\be
\label{proof-110}
\aligned
(P_0^h - P_1^h) (\uh)^{k+1} \to 0 
\endaligned
\ee
in $L^{4/3}(0,T;L^1_\loc(\RR))$ as $h\to 0$. Since $ P_0^h$ commutes with the nonlinearity, 
it follows from \eqref{proof-100} that
\be
\label{proof-120}
\aligned
P_0^h (\uh)^{k+1} \to u^{k+1} \quad \text{in}\quad L^1(0,T;L^1_\loc(\RR)).
\endaligned
\ee
Hence, we deduce from \eqref{proof-110},\eqref{proof-120} that
\[
\aligned
P_0^h D_0(\uh)^{k+1} = \del_x P_1^h (\uh)^{k+1} \to \del_x u^{k+1} 
\endaligned
\]
in the sense of distributions. For the other nonlinear term, we note that 
$ P_0^h [(\uh)^k D_0 \uh] = P_0^h (\uh)^k P_0^h D_0 \uh$. We have
\[
\aligned
P_0^h (\uh)^k \to u^k \quad \text{in}\quad L^2(0,T; L^2_\loc(\RR))
\endaligned
\]
and, from \eqref{proof-95}, 
\[
\aligned
P_0^h D_0 \uh = \del_x P_1^h \uh \rightharpoonup \del_x u \quad \text{in}\quad
L^2(0,T; L^2_\loc(\RR)).
\endaligned
\]
Therefore, 
\[
\aligned
P_0^h [(\uh)^k D_0 \uh]  \to u^k \del_x u \quad \text{in} \quad L^1(0,T;L^1_\loc(\RR)).
\endaligned
\]
Multiplying \eqref{proof-105} by a test function in $\Dcal(\RR\times (0, \infty))$, the above convergences
allow us to conclude that $u$ verifies the property \eqref{sol} of Definition~\ref{def-sol}.

It remains to prove the weak $L^2(\RR)$-valued continuity property, \eqref{thm-10}, 
and that $u(t) \to \varphi$ in $L^2(\RR)$ as $t \to 0$ a.e. To prove the weak continuity property,
we remark that, for $\phi \in \Dcal(\RR)$, $t\in [0,T)$,
\[
\aligned
\big( P_1^h \uh(t + \rho) - P_1^h \uh(t), \phi \big) = \int_t^{t+\rho} \big( \frac{d \uh}{dt}
,\phi \big) \,ds
\endaligned
\]
and, from \eqref{proof-20}, we get
\[
\aligned
\big| \big( P_1^h \uh(t + \rho) - P_1^h \uh(t), \phi \big) \big| \le C\rho
\endaligned
\]
with $C= C(T,\phi)$, and so the family $t\mapsto P_1^h \uh(t)$ is uniformly bounded
in $L^2$ (see \eqref{P1})
and weakly equicontinuous. Therefore, the Ascoli--Arzel\`a Theorem implies that 
$u \in C_w([0,\infty); L^2(\RR))$.

Finally, since $\|u(t)\|_2 \le \| \varphi\|_2$ a.e. in $t$, we have
\[
\aligned
\| u(t) - \varphi\|_2^2 &\le \| u(t) \|_2^2 - 2 \big( u(t), \varphi \big) + \| \varphi\|_2^2 
\\
&\le 2 \| \varphi\|_2^2 - 2 \big( u(t), \varphi \big) \to 0
\endaligned
\]
for almost every $t\to 0^+$. This completes the proof of Theorem~\ref{Thm-01}.
\end{proof}


\section{Numerical experiments}
\label{Num}
\subsection{A fully discrete, fully implicit scheme}
In this section, we present some numerical experiments to test the accuracy of our scheme and to 
illustrate our results. In order to fully discretize the semi-discrete equations \eqref{FDA1}, we
use a fully implicit Euler scheme, as follows. Given a time step $\tau$ and a space step $h$, solve
for each $n =1,2,\dots$ the equations
\be
\label{num-10}
\aligned
&\frac{\unnj - \unj}\tau +  D^3 \unnj  + \frac{k+1}{k+2} [ (\unnj)^k D_0 \unnj
+ D_0 (\unnj)^{k+1} ] 
\\
&\qquad+ \eta\, h \Delta_h^2 \unnj = 0.
\endaligned
\ee
We have set $\beta = 1$ and introduced a new viscosity parameter $\eta>0$ allowing us to explicitly
control the amount of viscosity in the scheme.

As is standard in the numerical simulation of dispersive equations, we consider a sufficiently 
large spatial domain and initial data exponentially small outside some bounded region, 
ensuring that spurious wave reflection at the boundary of the domain remains negligible. 

Written in full, the scheme \eqref{num-10} reads
\[
\aligned
&\frac{\unnj - \unj}\tau +  \frac1{2h^3} \big( \unnjjj -2 \unnjj +2\unnjm -\unnjmm\big) 
\\
&\quad+ \frac{k+1}{2h(k+2)} \big( (\unnj)^k (\unnjj - \unnjm) +  (\unnjj)^{k+1} - (\unnjm)^{k+1} \big)
\\
&\qquad+ \frac\eta{2h^3} \big( \unnjjj - 4 \unnjj + 6 \unnj - 4\unnjm +\unnjmm\big) = 0.
\endaligned
\]
Due to the nonlinear terms, it is necessary to perform a Newton iteration at each time step,
which we carry out with a tolerance of $10^{-6}$ in the simulations below. To solve the 
pentadiagonal linear system at each iteration of Newton's method, we employ a standard $LU$ 
decomposition method.

\subsection{Comparison with exact solutions}
The first step is to test our scheme with the known soliton solutions of the KdV equation 
\eqref{KdV0} 
\cite[p.~140]{LinaresPonce}. These read 
\be
\label{exact}
\aligned
u(x,t) = \big[ \frac12(k+2) c^2 \mathrm{sech}^2(\frac k2c(x - c^2t)) \big]^{1/k}
\endaligned
\ee
for arbitrary $c>0$ and consist of traveling waves with speed $c^2$. We observe in passing that 
these exact solutions actually solve the equation \eqref{KdV0} and not the slightly different version
in \cite[p.~139]{LinaresPonce}.

In Figures \ref{fig-10} and \ref{fig-20}, we present the $L^2$ error between the 
exact solution \eqref{exact} and the 
computed solution, at $T=10$, computed on the domain $x\in(10,50)$, as a function of 
the number of spatial points, for different values of 
the time step $\tau$, and, respectively, for $k=1$ and $k=2$.

One advantage of the present method is that it allows direct control of the amount of dissipation
my means of the parameter $\eta$ in \eqref{num-10}. We first note that our convergence results 
remain valid for any $\eta>0$ (but not for $\eta = 0$). As would be expected, reducing the value of
$\eta$ provides a sharper, less dissipative approximation. This is confirmed by our simulations, and
in Figure~\ref{fig-30} we present the $L^2$ error at $T=10$ for various values of $\eta$.
Interestingly, setting $\eta=0$ sometimes provides a very good approximation, but not always,
which is perhaps a consequence of the instability of the scheme without dissipation.

\begin{figure}
\includegraphics[width=\linewidth,keepaspectratio=false]{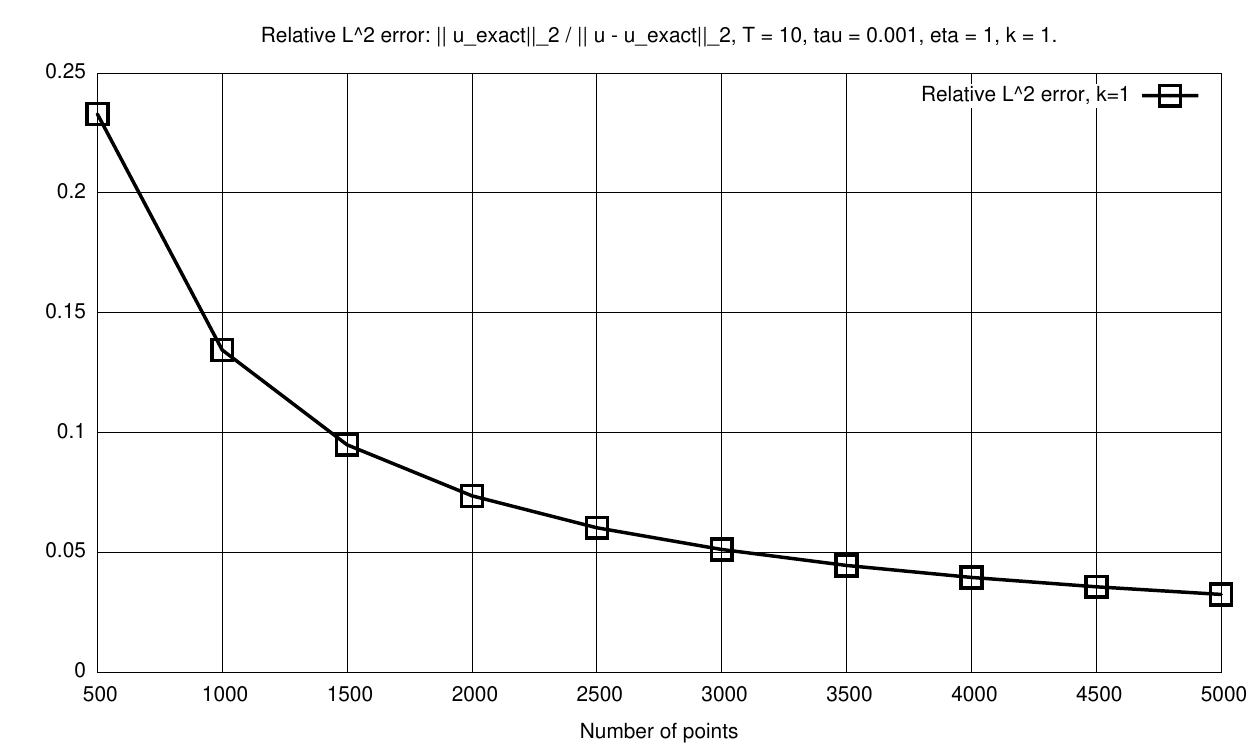}
\caption{Relative $L^2$ error as a function of the number of spatial points, k=1 (KdV equation).}
\label{fig-10}
\end{figure}

\begin{figure}
\includegraphics[width=\linewidth,keepaspectratio=false]{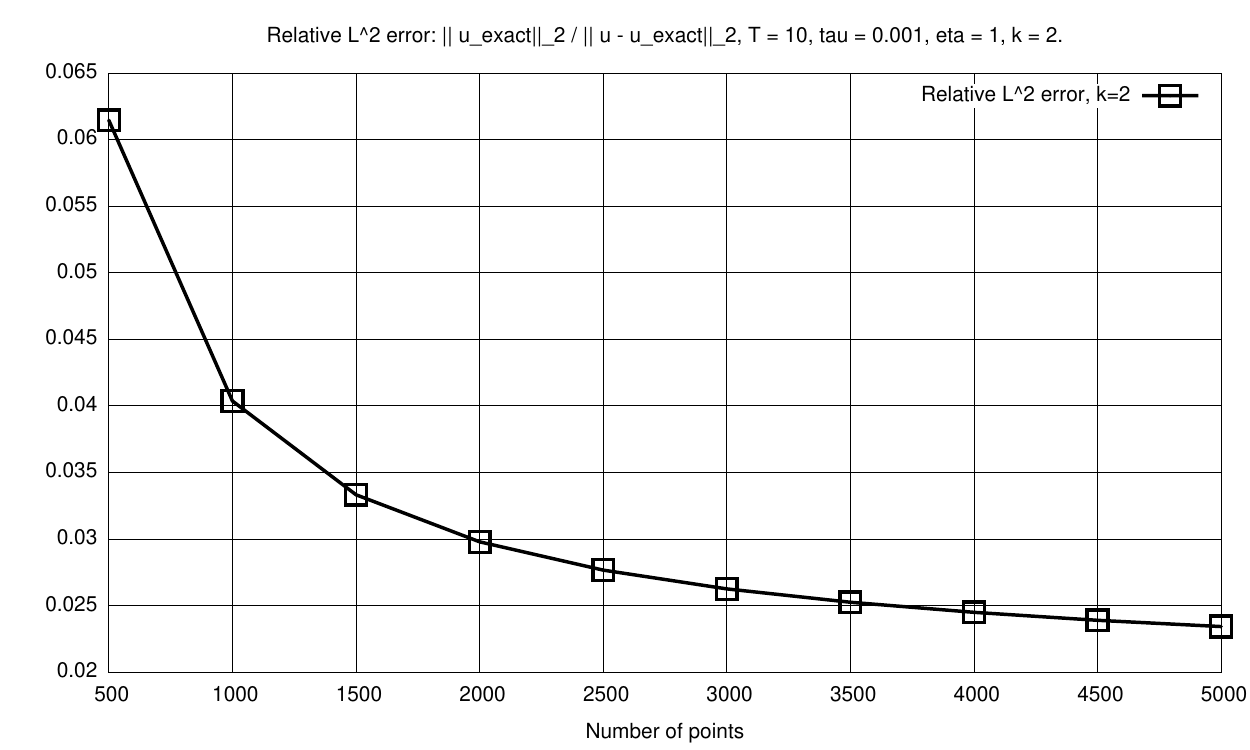}
\caption{Relative $L^2$ error as a function of the number of spatial points, k=2 (mKdV equation).}
\label{fig-20}
\end{figure}

\begin{figure}
\includegraphics[width=\linewidth,keepaspectratio=false]{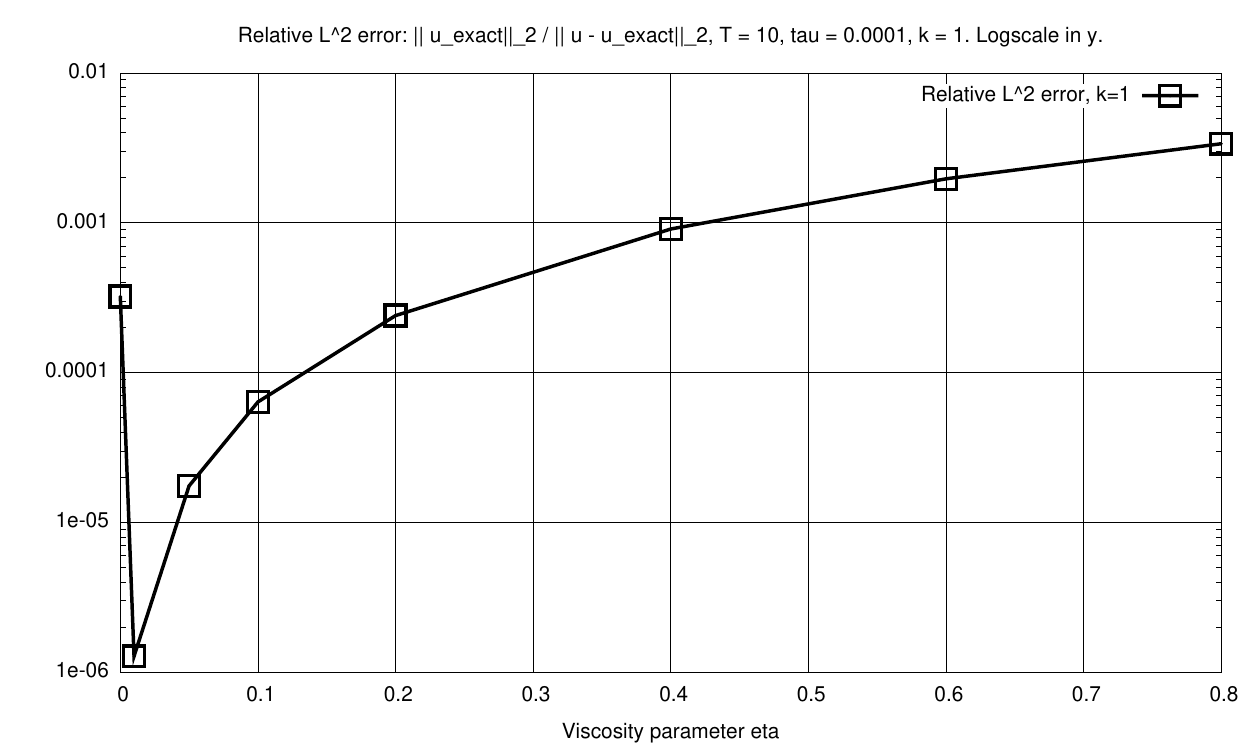}
\caption{Relative $L^2$ error as a function of the viscosity parameter $\eta$ (KdV equation).}
\label{fig-30}
\end{figure}

\section{On an open question of Y.~Tsutsumi}
\label{Miura}
In \cite{Tsutsumi}, the Cauchy problem for the KdV equation \eqref{KdV0} with measure initial
data is considered. In that work, the author addresses the open question of uniqueness
of solution to the Cauchy problem for the KdV equation with measure initial data 
in the following way.

It is well known that a solution of the mKdV equation
with $L^2$ initial data may be transformed, by the Miura transform
$u \mapsto \mathcal{M}(u) = \del_x u + u^2$, into a solution
of the KdV equation with a measure as initial data. 
Now, the family of functions
\be
\label{num-30}
u_c^0(x) = \left\{
\aligned
&\frac{c+1}{(c+1) x +c}, && x>0,
\\
&\frac1{x+c}, && x<0,
\endaligned
\right.
\ee
with $c\le -1,$ all verify $\mathcal{M}(u_c^0) = \delta(0)$, where $\delta$ denotes the Dirac delta.
Therefore, if $u_c(x,t)$ is the solution of the mKdV equation with initial data $u_c^0(x)$, the 
question arises whether the Miura transform maps each of these different solutions to the same
solution of the KdV equation with $\delta(0)$ as initial data, or if, on the contrary, 
$\mathcal{M}(u_c(x,t))$
varies with $c$, which would establish non-uniqueness. 
If the latter case is observed numerically, it
would support the conjecture 
that the Cauchy problem for the KdV equation with measure initial data 
\emph{does not} enjoy the uniqueness property.

We have investigated this question numerically, with a high degree of precision, 
and found that our numerical experiments support this
lack of uniqueness conjecture. Thus, we have considered the mKdV equation with initial data given by
$u_c^0$ (see \eqref{num-30}) for various values of $c\le -1$, computed the solution $u_c(x,t)$
up to some time $T>0$,
applied the Miura transform $\mathcal{M}(u_c(x,t))$, and finally compared the solutions obtained.

We have observed a clear dependence of $\mathcal{M}(u_c(x,t))$ as $c\le-1$ varies, see 
Figure~\ref{fig-60}.
This provides strong numerical evidence in support of a non-uniqueness property for the KdV equation
with measure initial data and also a non-trivial test of the robustness of our numerical method:
recall that the initial data \eqref{num-30} are discontinuous functions in $L^2$ only.

Note that these simulations were computed with an accuracy of 30000 spatial points. Due to the 
slow decay of the solution, the computational domain is taken to be 
the interval $[-500,500]$, giving a value of 
$h = 1/30.$ We have also performed the computations with a coarser grid of 5000 points, 
and have found that the (natural) slight variation with $h$ does not affect the overall qualitative
aspect of the solution. In other words, the lack of uniqueness conjecture is strongly supported
by our precise numerical experiments.

Finally, we have verified as well that the result does not depend on the viscosity parameter $\eta$
appearing in \eqref{num-10}. The simulations presented take $\eta = 0.001$, but considering larger values
of $\eta$ (up to $\eta = 0.1$) gives virtually indistinguishable results.

In fact, it is easy to check that the more general family
\be
\label{num-40}
u_{c,\eps}^0(x) = \left\{
\aligned
&\frac{c+\eps}{(c+\eps) x +c}, && x>0,
\\
&\frac1{x+c}, && x<0
\endaligned
\right.
\ee
verifies $\mathcal{M}(u^0_{c,\eps}) = N(c,\eps) \delta_0,$ with $N = (c+\eps-1)/c$. The same 
remarks about uniqueness apply, and so as a last test
we have carried out simulations with $(c,\eps)=(-1/4,1/4)$ and
$(c,\eps)=(-1,-2)$ (for which $N(c,\eps) = 4$), performing the same comparison of the Miura 
transform of the computed solutions. 

For these simulations we have taken a very fine grid of 50000 spatial points, which corresponds to 
$h=0.02.$ The viscosity parameter is $\eta=0.001$. In Figure~\ref{fig-70} we plot the 
Miura transform of the solution for two different values of $(c,\eps)$, with $T=10$, and 10000 and 50000 spatial points for each value of $(c,\eps)$.

Again, some variation with $h$ is observed 
for the same values of $(c,\eps)$, which is natural since the scheme includes dissipation. 
But the main thing to note are the appearance	of two distinct solutions, one for each set of values 
of the pair $(c,\eps),$ clearly apparent in
Figure~\ref{fig-70}. The same distinction between the two solutions is also apparent for 
intermediate values of the number of grid points, whose solutions are seen to lie smoothly between
the ones presented here.

We can therefore conclude that our numerical experiments strongly indicate lack of 
uniqueness for the Cauchy problem
for the KdV equation with a measure initial data.

\begin{figure}
\includegraphics[width=\linewidth,keepaspectratio=false]{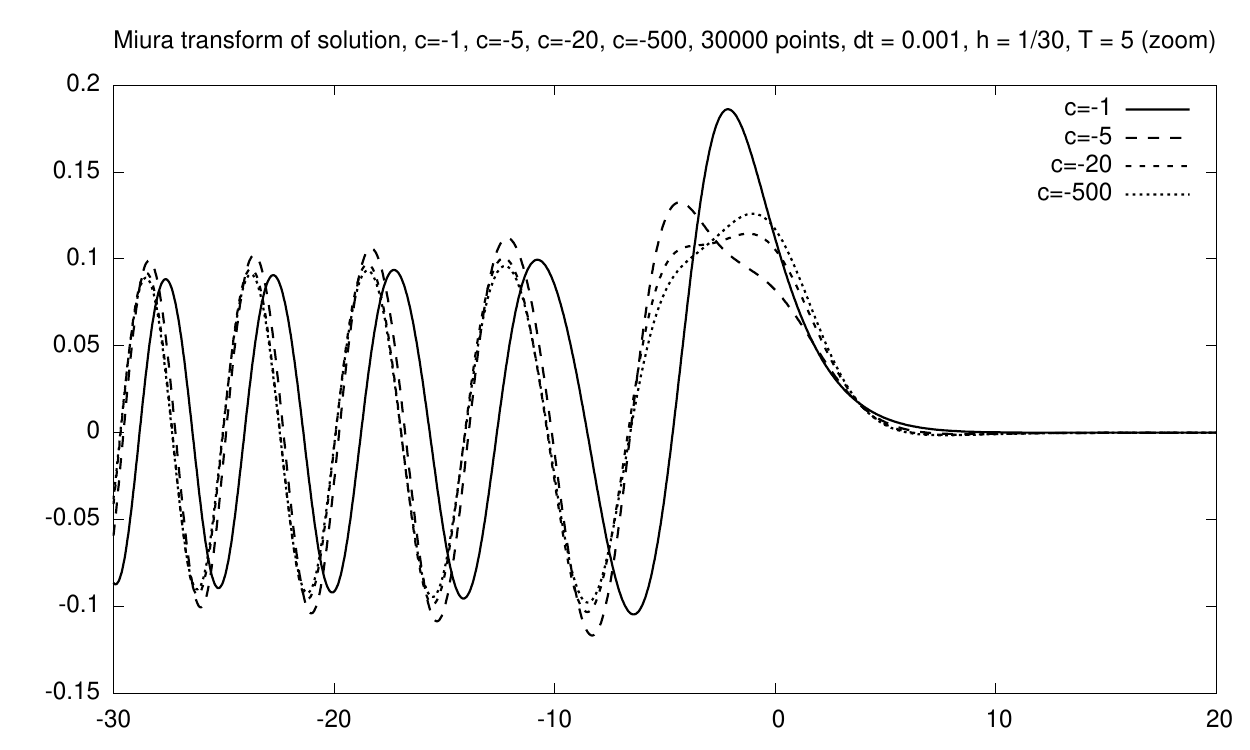}
\caption{Miura transform of solution for various values of $c$. $T=5$, 30000 spatial points, $\tau=0.001$.}
\label{fig-60}
\end{figure}

\begin{figure}
\includegraphics[width=\linewidth,keepaspectratio=false]{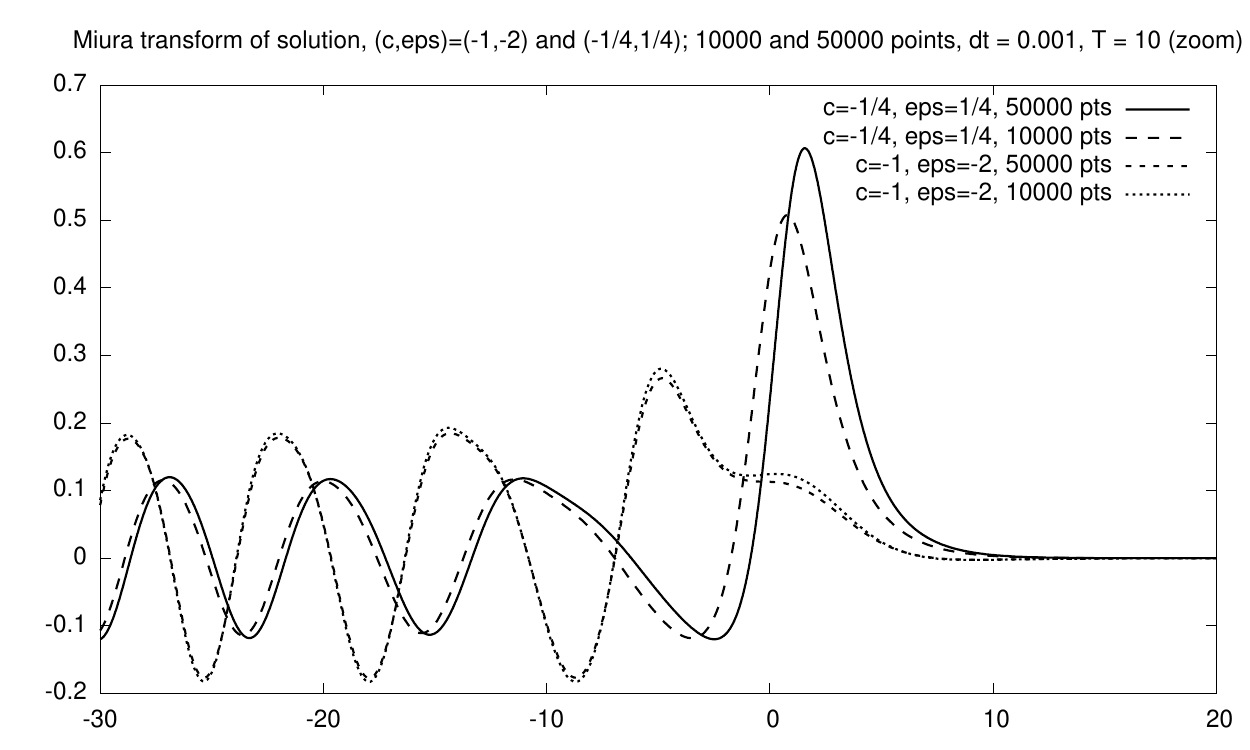}
\caption{Miura transform of solution for various values of $c,\eps$. $T=10$, 10000 and 50000 spatial points. $\tau=0.001$.}
\label{fig-70}
\end{figure}

\section*{Acknowledgements}
The authors were partially supported by the Portuguese Foundation for Science and Technology (FCT)
through the grant PTDC/MAT/110613/2009.
PA was supported by the Portuguese Foundation for Science and Technology (FCT) through a 
\emph{Ci\^encia~2008} fellowship. 


\end{document}